\newtheorem{thm}{Theorem}[section]
\newtheorem{prop}[thm]{Proposition}
\newtheorem{defn}[thm]{Definition}
\newtheorem{lem}[thm]{Lemma}
\newtheorem{cor}[thm]{Corollary}
\newtheorem{rem}[thm]{Remark}
\newtheorem*{thmn}{Theorem}
\theoremstyle{remark}
\newcommand{\subsubsubsection}{\@startsection{paragraph}{4}{\z@}%
 {1.0\Cvs \@plus.5\Cdp \@minus.2\Cdp}%
 {.1\Cvs \@plus.3\Cdp}%
 {\reset@font\sffamily\normalsize}
 }
\DeclareMathOperator{\Fr}{Fr}
\DeclareMathOperator{\Gal}{Gal}
\DeclareMathOperator{\id}{id}
\DeclareMathOperator{\tr}{tr}
\DeclareMathOperator{\Ker}{Ker}
\DeclareMathOperator{\Aut}{Aut}
\DeclareMathOperator{\Tr}{Tr}
\DeclareMathOperator{\Nr}{Nr}
\DeclareMathOperator{\Image}{Im}
\DeclareMathOperator{\GL}{GL}
\DeclareMathOperator{\HU}{HU}
\DeclareMathOperator{\oU}{U}
\DeclareMathOperator{\oH}{H}
\newcommand{\bF}{\mathbb{F}}
\newcommand{\bQ}{\mathbb{Q}}
\newcommand{\bfH}{\mathbf{H}}
\newcommand{\bfU}{\mathbf{U}}
\newcommand{\cf}{\textit{cf.\ }}
\begin{document}


\title
{Shintani lifts for Weil representations\\ 
of unitary groups over finite fields} 

\author{Naoki Imai and Takahiro Tsushima}
\date{}
\maketitle
\begin{abstract}
We construct extended Weil representations of unitary groups 
over finite fields geometrically, 
and show that they are Shintani lifts for Weil representations. 
\end{abstract}
\footnotetext{2010 \textit{Mathematics Subject Classification}. 
 Primary: 20C33; Secondary: 11F27.} 

\section{Introduction}\label{intro}
Let $q$ be a power of a prime number $p$. 
Let $n$ be a positive integer. 
We define 
\[
\oU_n(q)=\{g \in \mathrm{GL}_n(\mathbb{F}_{q^2})
\mid g^\ast g=\mathrm{I}_n\}, 
\]
where $g^\ast=(a_{j,i}^q)$ for $g=(a_{i,j})$. 
Let $\ell \neq p$ be a prime number. 
Let $\psi$ be a non-trivial character of 
$\mathbb{F}_{q,+}=\{x \in \mathbb{F}_{q^2} \mid x^q+x=0\}$ 
over $\overline{\mathbb{Q}}_{\ell}$. 
A Weil representation of 
a unitary group $\oU_n(q)$ associated to $\psi$ is constructed in \cite{GerWeil}, which we denote by 
$\rho_{\oU_n(q),\psi}$. 
Let $m$ be a positive odd integer. 

In this paper, we investigate 
the behavior of $\rho_{\oU_n(q),\psi}$ via Shintani 
lifting from $\mathbb{F}_q$ to $\mathbb{F}_{q^m}$. 
Let $\Gamma$ be a cyclic group of order $m$ with 
generator $\sigma$. 
Let $F \colon \oU_n(q^m) \to \oU_n(q^m);\ 
g \mapsto ({}^t g^{\tau})^{-1}$, where 
$\tau((a_{i,j}))=(a_{i,j}^q)$.
Let $\sigma$ act on $\oU_n(q^m)$ as $F$. 
We consider the semidirect group 
$\oU_n(q^m) \rtimes \Gamma$. 
In \cite{GyoLift}, Gyoja defines a norm map 
from 
the set of $\oU_n(q^m)$-conjugacy classes in 
$\oU_n(q^m) \sigma$ to 
the set of conjugacy classes in 
$\oU_n(q)$, which is denoted by $\mathrm{N}$.
We set $\psi_m=\psi \circ \Tr_{\mathbb{F}_{q^{2m}}/\mathbb{F}_{q^2}} \colon \mathbb{F}_{q^m,+} \to 
\overline{\mathbb{Q}}_{\ell}^{\times}$.
\begin{thmn}
There is a unique extension 
$\widetilde{\rho}_{\oU_n(q^m),\psi_m}$ of 
$\rho_{\oU_n(q^m),\psi_m}$ to 
$\oU_n(q^m) \rtimes \Gamma$ such that 
\begin{equation*}\label{int}
\tr \widetilde{\rho}_{\oU_n(q^m),\psi_m}(g,\sigma)
=\tr \rho_{\oU_n(q),\psi}(\mathrm{N}(g,\sigma))
\end{equation*}
for any $g \in \oU_n(q^m)$. 
\end{thmn}
This is a version of \cite[Theorem in 
\S1]{HeWaWeil} in unitary cases. 
Similarly to \cite[Theorem 4.1]{HeWaWeil}, we actually show 
a stronger equality. 
Namely, we show a similar equality for Heisenberg--Weil representations 
and more general norm maps. 
See Theorem \ref{main} for the precise statement. 
As mentioned in \cite[\S1]{HeWaWeil}, 
the arguments in \cite{HeWaWeil} work also in unitary cases. 
In this paper, we use an inductive argument similar 
to that in \cite{HeWaWeil}, 
but replace some ingredients in the proof with geometric inputs: 
In \cite{HeWaWeil}, an extended Weil representation 
is constructed based on the 
Schr\"{o}dinger model of a Weil representation. 
On the other hand, 
we construct $\widetilde{\rho}_{\oU_n(q^m),\psi_m}$ 
in a geometric way and study the representation by geometric tools. 

In \cite[Theorem 2.5]{ITGeomHW}, it is known that 
the representation ${\rho}_{\oU_n(q^m),\psi_m}$ is realized in the $\psi_m$-isotypic part of the middle $\ell$-adic cohomology of 
the smooth affine variety over $\mathbb{F}_{q^2}$
defined by 
\begin{equation*}\label{ft}
 z^{q^m}+z=\sum_{i=1}^n x_i^{q^m+1} 
\end{equation*}
in $\mathbb{A}_{\mathbb{F}_{q^2}}^{n+1}$. 
In order to extend this representation to 
$\oU_n(q^m) \rtimes \Gamma$, 
we use the Frobenius action over $\mathbb{F}_{q^2}$. 
A merit in our geometric construction is that the the action of the cyclic group $\Gamma$ in the Shintani lift appears very naturally from the rationality of the above variety. 
We believe that this gives a new insight on the relation between liftings of representations and geometry.

We briefly explain the content of each section. 
In \S \ref{sec:norm}, we collect several known 
facts on Gyoja's norm map. 
In \S \ref{Pre}, we recall unitary Heisenberg--Weil representations. 
In \S \ref{GC}, we construct an extension 
$\widetilde{\rho}_{\HU_n(q^m),\psi_m}$ of 
a Heisenberg--Weil representation 
geometrically and show Theorem \ref{main} when $n=1$ 
in Proposition \ref{prop:trzFr}. 
In \S \ref{od}, we study 
the behavior of $\widetilde{\rho}_{\HU_n(q^m),\psi_m}$ restricted to 
several subgroups. 
In \S \ref{Sup}, we study 
the support of the trace of $\widetilde{\rho}_{\HU_n(q^m),\psi_m}$. 
In \S \ref{sec:Main}, we state our main theorem. 
Under the knowledges in \S \ref{od}--\S \ref{Sup}, we reduce 
Theorem \ref{main} to a special case in Lemma \ref{inv} in \S \ref{Red}. 
By using the character formula of a tensor induction 
in \cite{GlIsTen}, 
we show Theorem \ref{main} in the special case 
in \S \ref{Prred}.

\subsection*{Notation}
\begin{itemize}
\item 
Let $q$ be a power of a prime number $p$. 
\item Let $\overline{\bF}_q$ be an algebraic closure of $\mathbb{F}_q$. 
For a scheme $X$ over $\mathbb{F}_q$ and an endomorphism 
$F$ of $X_{\overline{\bF}_q}$, let 
\[
 X^F=\{x \in X(\overline{\bF}_q) \mid F(x)=x\}.
\]
\item 
For a positive integer $m$, let $\Fr_{q^m}$ denote the geometric Frobenius element of $\Gal (\overline{\bF}_q/\bF_{q^m})$ given by $\overline{\bF}_q \to \overline{\bF}_q;\ x \mapsto x^{q^{-m}}$. 
\item 
Let $\ell$ be a prime number different from $p$. 
\item 
For an abelian group $A$, let 
$A^{\vee}=\mathrm{Hom}_{\mathbb{Z}}
 (A,\overline{\mathbb{Q}}_{\ell}^{\times})$. 
\item 
For a representation $W$ over $\overline{\mathbb{Q}}_{\ell}$ of an abelian group $A$ 
and $\chi \in A^{\vee}$, 
let $W[\chi]$ denote the $\chi$-isotypic part of $W$. 
\end{itemize}

\section{Norm map}\label{sec:norm}
We follow \cite[\S3]{HeWaWeil} and \cite[\S3]{GyoLift}. 
Let $\mathbf{G}$ be a connected algebraic group 
 over $\mathbb{F}_q$ with Frobenius 
endomorphism $F$. 
Let $m$ be a positive integer. 
We put $G=\mathbf{G}^{F^m}$. 

Let $\Gamma$ be a cyclic group of order $m$ with a generator 
$\sigma$.
Let $\sigma$ act on $G$ as 
$F$. 
We consider the semidirect group $G \rtimes \Gamma$. 
Let $1 \leq i \leq m$ be an integer. 
We set $d=(m,i)$. We choose an integer $t$ such that
$ti \equiv d \pmod m$.  We set $\mu=m/d$. 
For $g \in G$, we can choose 
\[
\alpha=\alpha(g) \in \mathbf{G}(\overline{\bF}_q) 
\]
such that 
\[
\alpha^{-1} F^d(\alpha)=(g,\sigma^i)^t (1,\sigma^{-it}) 
\]
by Lang's theorem. 
We put 
\[
\mathrm{N}_{i,t}(g,\sigma^i)=\alpha g F^i(g) \cdots F^{i(\mu-1)}(g) 
\alpha^{-1}. 
\]
The element $\mathrm{N}_{i,t}(g,\sigma^i)$
does belong to $G^{F^i}$. Its conjugacy class 
does not depend on the choice of $\alpha$. 
By \cite[Lemma 3.2(1)]{GyoLift}, $\mathrm{N}_{i,t}$ 
induces a bijection from the set of $G$-conjugacy classes in $G  \sigma^i$ to the set of 
conjugacy classes in $G^{F^i}$.  
The norm map is originally defined in \cite{ShiTworem}
in the case where $i=t=1$ 
and generalized in \cite{KawIrruni}. 

Let $C$ be an algebraically closed field of characteristic $0$. 
Let $\mathcal{C}(G  \sigma^i)$ denote the 
vector space of $C$-valued functions on 
$G  \sigma^i$ which are invariant under conjugation by $G$. 
For a finite group $H$, let $\mathcal{C}(H)$ denote the 
set of $C$-valued class functions on $H$. 
Then we have the bijection 
\[
\mathcal{N}_{i,t} \colon \mathcal{C}(G^{F^i}) \to \mathcal{C}(G  \sigma^i);\ 
\chi \mapsto \chi \circ \mathrm{N}_{i,t}. 
\]
This induces the bijection 
\[
 \mathcal{C}(G^{F^i})^F \stackrel{\sim}{\longrightarrow} \mathcal{C}(G  \sigma^i)^{\sigma} 
\]
by \cite[Lemma 3.2(2)]{GyoLift}, 
where $\sigma$ acts on $G  \sigma^i$ by the conjugation. 
Note that $G^{F^i}=\mathbf{G}^{F^d}$. 
\begin{lem}\label{gy}
Let $\mathbf{H}$ be a connected algebraic subgroup of 
$\mathbf{G}$ defined over $\bF_q$, and $\mathbf{G}_1,\mathbf{G}_2$
two connected algebraic groups over $\mathbb{F}_q$ with 
Frobenius endomorphisms $F$. Let 
$H=\mathbf{H}^{F^m}$ and 
$G_j=\mathbf{G}_j^{F^m}$ for $j =1,2$.
\begin{enumerate}
\item\label{enu:HInd} 
Let $\widetilde{\chi} \in \mathcal{C}(H \rtimes \Gamma)$, 
and take $\chi \in \mathcal{C}(H^{F^i})^F$ 
such that $\widetilde{\chi}|_{H  \sigma^i}=\mathcal{N}_{i,t}(\chi)$. 
Then we have 
\[
(\mathrm{Ind}_{H \rtimes \Gamma}^{G \rtimes \Gamma}\widetilde{\chi})|_{G  \sigma^i}
=\mathcal{N}_{i,t}(\mathrm{Ind}_{H^{F^i}}^{G^{F^i} }\chi). 
\]
\item 
Let 
$\widetilde{\chi} \in \mathcal{C}(G \rtimes \Gamma)$, 
and take $\chi \in \mathcal{C}(G^{F^i})^F$ 
such that $\widetilde{\chi}|_{G \sigma^i}
=\mathcal{N}_{i,t}(\chi)$. 
Then we have 
\[
(\widetilde{\chi}|_{H \rtimes \Gamma})|_{H \sigma^i}=\mathcal{N}_{i,t}(\chi|_{H^{F^i}}). 
\]
\item 
Assume that $\mathbf{H}$ is a normal algebraic subgroup of $\mathbf{G}$. 
We identify $G/H$ with $(\mathbf{G}/\mathbf{H})^{F^m}$. 
Let $\widetilde{\chi} \in \mathcal{C}((G/H) \rtimes \Gamma)$, 
and take $\chi \in \mathcal{C}((G/H)^{F^i})^F$ 
such that 
$\widetilde{\chi}|_{(G/H)  \sigma^i}
=\mathcal{N}_{i,t}(\chi)$. 
Let 
\[
p' \colon G \rtimes \Gamma \to (G/H) \rtimes \Gamma, 
\quad 
p \colon G^{F^i} \to (G/H)^{F^i}
\]
be the canonical projections. 
Then we have 
\[
(\widetilde{\chi} \circ p')|_{G  \sigma^i}=\mathcal{N}_{i,t}(\chi \circ p). 
\]
\item\label{enu:chiprod} 
Let $\widetilde{\chi}_j \in \mathcal{C}(G_j \rtimes \Gamma)$ 
for $j=1,2$, and take 
$\chi_j \in \mathcal{C}(G_j^{F^i})^F$ 
such that 
$\widetilde{\chi}_j|_{G_j  \sigma^i}
=\mathcal{N}_{i,t}(\chi_j)$. 
Then we have 
\[
(\widetilde{\chi}_1 \times \widetilde{\chi}_2)|_{
(G_1 \times G_2)  \sigma^i}
=\mathcal{N}_{i,t}(\chi_1 \times \chi_2), 
\]
where 
$(G_1 \times G_2) \sigma^i$ is 
identified with 
\[ 
\bigl\{ \bigl( (g_1,\sigma^i),(g_2,\sigma^i) \bigr) \in (G_1 \rtimes \Gamma) \times (G_2 \rtimes  \Gamma) \bigm| g_1 \in G_1 , \ g_2 \in G_2 \bigr\}. 
\]
\end{enumerate}
\end{lem}
\begin{proof}
This is proved in \cite[Lemma 3.6]{GyoLift} if $C=\mathbb{C}$. 
The same proof works also for a general $C$. 
\end{proof}

\section{Heisenberg--Weil representation}\label{Pre}
Let $V$ be a vector space over $\mathbb{F}_{q^2}$ 
with a hermitian form $h$. 
In this paper, a hermitian form is supposed to be sesquilinear on the first coordinate. 
For the hermitian space $(V,h)$, 
we define $\bfH_{(V,h)}$ over $\bF_q$ as 
\[
 \bfH_{(V,h)} (R) = \{ (v,a) \in 
 (V \otimes_{\bF_q} R ) \times (\bF_{q^2} \otimes_{\bF_q} R) 
 \mid (\Fr_q \otimes \id_R) (a) +a =h_R (v,v) \} 
\]
for an $\bF_q$-algebra $R$, 
where 
$h_R$ is the $R$-linear extension of $h$, 
with the group operation
\[
 (v,a)(v',a')=(v+v', a+a'+h_R(v,v')). 
\] 
We put 
$\oH (V,h) =\bfH_{(V,h)} (\bF_q)$ 
and call it 
the unitary Heisenberg group associated to $(V,h)$. 

Assume that $h$ is nondegenerate in the sequel. 
Let $\bfU_{(V,h)}$ be the unitary algebraic group over $\bF_q$ 
defined as 
\begin{align*}
 \bfU_{(V,h)} &(R)\\  
 = \{ & g \in \Aut_{\bF_{q^2} \otimes_{\bF_q} R} 
 (V \otimes_{\bF_q} R)
 \mid h_R (gv_1,gv_2) =h_R (v_1,v_2) \ 
 \textrm{for $v_1, v_2 \in V \otimes_{\bF_q} R$} \} 
\end{align*}
for an $\bF_q$-algebra $R$, 
with an action on $\bfH_{(V,h)}$ defined by 
$(v,a) \mapsto (gv,a)$ for 
$(v,a) \in \bfH_{(V,h)}(R)$ and 
$g \in \bfU_{(V,h)}(R)$. 
We put 
\[
\mathbf{HU}_{(V,h)}=\bfH_{(V,h)} \rtimes \bfU_{(V,h)}. 
\]
Further we put 
\[
 \oU (V,h) =\bfU_{(V,h)}(\bF_q), \quad 
 \HU(V,h)=\mathbf{HU}_{(V,h)}(\bF_q). 
\] 
We simply write 
$\bfH_V$, 
$\bfU_V$ and 
$\mathbf{HU}_V$ 
for 
$\bfH_{(V,h)}$, 
$\bfU_{(V,h)}$ and 
$\mathbf{HU}_{(V,h)}$ 
if it is clear which hermitian form is involved.

For a positive integer $m$, we put 
\[
 \mathbb{F}_{q^m,+}=\{x \in \overline{\mathbb{F}}_q \mid x^{q^m}+x=0\}. 
\] 
Let $\psi \in \mathbb{F}_{q,+}^{\vee} \setminus \{1\}$. 
Let $\rho_{\oH (V,h),\psi}$ denote the unique 
irreducible representation of $\oH (V,h)$ 
whose restriction to the center $Z(V,h)$ contains $\psi$. 
We put $n=\dim_{\mathbb{F}_{q^2}} V$. Then 
$\rho_{\oH (V,h),\psi}|_{Z(V,h)} \simeq \psi^{\oplus q^n}$. 
The following is shown in \cite{GerWeil} 
and stated in \cite[Lemma 2.2]{ITGeomHW}. 

\begin{lem}\label{lem:hw}
There exists a unique representation 
$\rho_{\HU(V,h),\psi}$ of 
$\HU(V,h)$ characterized by 
\begin{itemize}
\item 
an isomorphism 
$\rho_{\HU(V,h),\psi}|_{\oH (V,h)} \simeq 
\rho_{\oH (V,h),\psi}$, and 
\item 
the equality 
$\tr \rho_{\HU(V,h),\psi}(g)=(-1)^n 
 (-q)^{N(g)}$ for $g \in \oU (V,h)$, where 
\[
N(g)=\dim_{\mathbb{F}_{q^2}} \ker (g-\id_V ). 
\]
\end{itemize}
\end{lem}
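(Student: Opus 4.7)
The plan is to combine the finite Stone--von Neumann theorem for the Heisenberg factor with a projective-to-genuine lifting on the unitary factor, then use the prescribed trace formula to single out the correct extension. First I would verify that the center of $H(V,h)$ is $\{(0,a)\mid a+a^q=0\}\cong\mathbb{F}_{q,+}$ and that the induced pairing on $V=H(V,h)/Z$ is a nondegenerate alternating form over $\mathbb{F}_q$. The finite Stone--von Neumann theorem then produces $\rho_{H(V,h),\psi}$ as the unique irreducible representation of $H(V,h)$ with central character $\psi$, of dimension $q^n$.

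Since $U(V,h)$ acts on $H(V,h)$ fixing the center pointwise, uniqueness yields intertwiners $T(g)\colon\rho_{H(V,h),\psi}\to\rho_{H(V,h),\psi}^g$ for each $g\in U(V,h)$, well-defined up to scalar; these assemble into a projective representation of $U(V,h)$ and hence of $\mathit{HU}(V,h)$ extending $\rho_{H(V,h),\psi}$. To lift this to a genuine representation, I would use the geometric construction referenced in the introduction: the $\psi$-isotypic part of the middle $\ell$-adic cohomology of $z^q+z=\sum_{i=1}^{n} x_i^{q+1}$ carries a natural honest $\mathit{HU}(V,h)$-action whose restriction to $H(V,h)$ is $\rho_{H(V,h),\psi}$. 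Alternatively, one can invoke the Schr\"{o}dinger-model construction of \cite{GerWeil}.

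For uniqueness, any two genuine extensions of $\rho_{H(V,h),\psi}$ differ by a linear character of $U(V,h)$, so the trace formula $\tr\rho(g)=(-1)^n(-q)^{N(g)}$ picks out at most one extension. The main obstacle is verifying this formula for the extension constructed above, and I would do so by induction on $n$. The inductive step uses an orthogonal decomposition $(V,h)=(V_1,h_1)\oplus(V_2,h_2)$, which yields a central product $H(V_1)\times_{\mathbb{F}_{q,+}}H(V_2)\simeq H(V,h)$ and a compatible factorization
\[
\rho_{\mathit{HU}(V,h),\psi}|_{(H(V_1)\times_{\mathbb{F}_{q,+}}H(V_2))\rtimes(U(V_1)\times U(V_2))}\simeq\rho_{\mathit{HU}(V_1),\psi}\otimes\rho_{\mathit{HU}(V_2),\psi};
\]
combined with $N(g_1\oplus g_2)=N(g_1)+N(g_2)$, this reduces the verification to the base case $n=1$. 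There $U_1(V,h)\simeq\mathbb{Z}/(q+1)$, and computing the character of the rank-one Heisenberg--Weil representation on any nontrivial element directly (for instance via the Grothendieck--Lefschetz formula applied to the Artin--Schreier curve $z^q+z=x^{q+1}$) gives the required value $-1$, matching $(-1)^1(-q)^0$.
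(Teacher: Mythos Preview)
The paper does not prove this lemma; it records it as a known result from \cite{GerWeil}. Your outline is sound through the Stone--von Neumann step and the observation that any two genuine extensions differ by a linear character of $U(V,h)$, but there is a gap in the inductive verification of the trace formula. The orthogonal-decomposition step only computes $\tr\rho(g)$ for $g\in U(V_1,h_1)\times U(V_2,h_2)$, that is, for those $g$ preserving a nontrivial orthogonal splitting of $V$. For $n>1$ there exist $g\in U(V,h)$ preserving no such splitting: whenever $\mathbb{F}_{q^2}[g]\subset\End_{\mathbb{F}_{q^2}}(V)$ is a field of degree $n$ over $\mathbb{F}_{q^2}$, the space $V$ is irreducible as an $\mathbb{F}_{q^2}[g]$-module and admits no proper $g$-stable subspace at all. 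Your induction says nothing about such elements, so the trace formula is not established on all of $U(V,h)$, and hence existence of an extension satisfying the second bullet is not yet proved.

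To close the gap you would need, beyond the orthogonal-sum reduction, a parabolic-induction step for $g$ stabilising a nontrivial isotropic subspace and then a direct computation for the residual ``anisotropic'' semisimple elements (those with $\mathbb{F}_{q^2}[g]$ a field). This is exactly the trichotomy the paper exploits later, in \S\ref{Red}--\S\ref{Prred}, for the extended representations. Alternatively, since you already invoke \cite{GerWeil} for the Schr\"odinger model, you may as well cite the trace computation from there directly---which is what the paper does.
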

We call $\rho_{\HU(V,h),\psi}$ 
the Heisenberg--Weil representation of $\HU(V,h)$
associated to $\psi$. 
The restriction of $\rho_{\HU(V,h),\psi}$
to the subgroup $\oU (V,h)$ is called 
the Weil representation of $\oU (V,h)$ 
associated to $\psi$. 

Let 
\[
 h_n \colon 
 \mathbb{F}_{q^2}^n \times \mathbb{F}_{q^2}^n \to \mathbb{F}_{q^2};\ 
 ((x_1,\ldots,x_n),(x'_1,\ldots,x'_n)) \mapsto 
 \sum_{i=1}^n x_i^q x'_i.
\]
Any nondegenerate hermitian space $(V,h)$ of dimension $n$ is isometric to $(\mathbb{F}_{q^2}^n,h_n)$. 
We simply write $\oH_n(q)$, $\oU_n(q)$ and 
$\HU_n(q)$ for 
$\oH (\mathbb{F}_{q^2}^n,h_n)$, $\oU (\mathbb{F}_{q^2}^n,h_n)$ and 
$\HU(\mathbb{F}_{q^2}^n,h_n)$, respectively.

\section{Geometric construction}\label{GC}
We give a 
geometric construction of an extended Heisenberg--Weil representation. 
Let $m$ be an odd positive integer. 
We consider the smooth affine variety over $\mathbb{F}_{q^2}$
defined by 
\begin{equation*}\label{val}
z^{q^m}+z=\sum_{i=1}^n x_i^{q^m+1}
\end{equation*}
in $\mathbb{A}_{\mathbb{F}_{q^2}}^{n+1}$, which is denoted by $X_{m,n}$.
The group $\HU_n(q^m)$ acts on $X_{m,n,\mathbb{F}_{q^{2m}}}$ by 
\begin{align*}
 & ((x_i),z) \mapsto (g(x_i),z) \quad 
 \textrm{for $g \in \oU_n(q^m)$}, \\
 & ((x_i),z) \mapsto 
 ((x_i)+v,z+a+h_n(v,(x_i))) \quad 
 \textrm{for $(v,a) \in \oH_n(q^m)$}. 
\end{align*}
Let $\HU_n(q^m)$ act on $H_{\rm c}^n(X_{m,n,\overline{\mathbb{F}}_q},\overline{\mathbb{Q}}_{\ell})$ by letting $g \in \HU_n(q^m)$ act as $(g^{-1})^\ast$.

Let $\nu \colon 
\mathrm{Gal}(\overline{\mathbb{F}}_q/\mathbb{F}_{q^2}) \to 
\overline{\mathbb{Q}}_{\ell}^{\times}$ be the character 
which sends the geometric Frobenius element $\mathrm{Fr}_{q^2}$ to $-q^{-1}$. 
Let $\psi \in \mathbb{F}_{q,+}^{\vee} \setminus \{1\}$. 
The restriction of the trace map 
$\Tr_{\mathbb{F}_{q^{2m}}/\mathbb{F}_{q^2}}$ to 
$\mathbb{F}_{q^m,+}$ induces 
$\Tr_{\mathbb{F}_{q^{2m}}/\mathbb{F}_{q^2}} \colon 
\mathbb{F}_{q^m,+} \to \mathbb{F}_{q,+}$. 
We set 
\[
\psi_m =\psi \circ \Tr_{\mathbb{F}_{q^{2m}}/\mathbb{F}_{q^2}} \in \mathbb{F}_{q^m,+}^{\vee}. 
\]

The Frobenius endomorphism $F$ of the algebraic group $\mathbf{HU}_{(\mathbb{F}_{q^2}^n,h_n)}$ over $\bF_q$ induces an automorphism of 
$\HU_n(q^m)$, for which we use the same symbol $F$. Viewing $\HU_n(q^m)$ as a subset of $\mathbb{F}_{q^{2m}}^n \times \mathbb{F}_{q^{2m}} \times \GL_n (\mathbb{F}_{q^{2m}})$ naturally, the coordinatewise $q$-th power map on $\mathbb{F}_{q^{2m}}^n \times \mathbb{F}_{q^{2m}} \times \GL_n (\mathbb{F}_{q^{2m}})$ induces an automorphism $\tau$ of $\HU_n(q^m)$. Then we have $F^2=\tau^2$ by the definitions. Hence we have $F=F^{m+1}=\tau^{m+1}$ as an automorphism of $\HU_n(q^m)$ because $\HU_n(q^m)=\mathbf{HU}_{(\mathbb{F}_{q^2}^n,h_n)}^{F^m}$ and $m$ is odd. 

We regard $H_{\rm c}^n(X_{m,n,\overline{\mathbb{F}}_q},\overline{\mathbb{Q}}_{\ell})$
as a $\mathrm{Gal}(\overline{\mathbb{F}}_q/\mathbb{F}_{q^2})$-representation as usual. 
By \cite[Rapport (1.8.1)]{DelCoet}, the action of $\Fr_{q^2}$ on $H_{\rm c}^n(X_{m,n,\overline{\mathbb{F}}_q},\overline{\mathbb{Q}}_{\ell})$ is equal to the pullback under the $q^2$-nd power Frobenius endomorphism $F_X$ of $X_{m,n}$ over $\bF_{q^2}$. 
Let $\mathrm{Gal}(\overline{\mathbb{F}}_q/\mathbb{F}_{q^2})$ 
act on $\HU_n(q^m)$ by letting 
$\mathrm{Fr}_{q^2}$ act as $F^{-2}$, 
since we have 
\[
 F_X^* (g^{-1})^*=(g^{-1}F_X)^*=(F_X F^{-2}(g)^{-1})^*=(F^{-2}(g)^{-1})^* F_X^* . 
\]
We regard 
\[
H_{\rm c}^n(X_{m,n,\overline{\mathbb{F}}_q},\overline{\mathbb{Q}}_{\ell})[\psi_m] \otimes \nu^n 
\] as an  
$\HU_n(q^m) \rtimes \mathrm{Gal}(\overline{\mathbb{F}}_q/\mathbb{F}_{q^2})$-representation, which we denote by $\widetilde{\rho}_{\HU_n(q^m),\psi_m}$. 

\begin{lem}[{\cite[Theorem 2.5]{ITGeomHW}}]\label{lem:HWg}
We have 
$H_{\rm c}^i(X_{m,n,\overline{\mathbb{F}}_q},\overline{\mathbb{Q}}_{\ell})
 [\psi_m]=0$ for $i \neq n$. 
The restriction of $\widetilde{\rho}_{\HU_n(q^m),\psi_m}$ 
to $\HU_n(q^m)$ 
is isomorphic to $\rho_{\HU_n(q^m),\psi_m}$. 
\end{lem}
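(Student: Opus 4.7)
The plan is to use Artin--Schreier descent together with K\"{u}nneth to reduce the cohomology of $X_{m,n}$ to a one-variable computation, and then to pin down the resulting representation using the characterization in Lemma \ref{lem:hw}.

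The translation action $z \mapsto z+a$ of $\mathbb{F}_{q^m,+}$ makes the projection $X_{m,n} \to \mathbb{A}^n_{\overline{\mathbb{F}}_q}$, $((x_i),z) \mapsto (x_i)$, an Artin--Schreier torsor, so
\[
 H_{\rm c}^{\ast}(X_{m,n,\overline{\mathbb{F}}_q},\overline{\mathbb{Q}}_{\ell})[\psi_m]
 \simeq H_{\rm c}^{\ast}(\mathbb{A}^n_{\overline{\mathbb{F}}_q}, f^{\ast}\mathcal{L}_{\psi_m}),
\]
where $f(x_1,\ldots,x_n) = \sum_i N(x_i)$ with $N(x)=x^{q^m+1}$, and $\mathcal{L}_{\psi_m}$ is the Artin--Schreier sheaf. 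Since $f$ splits as a sum of one-variable functions, K\"{u}nneth gives
\[
 H_{\rm c}^{\ast}(\mathbb{A}^n_{\overline{\mathbb{F}}_q}, f^{\ast}\mathcal{L}_{\psi_m})
 \simeq \bigotimes_{i=1}^n H_{\rm c}^{\ast}(\mathbb{A}^1_{\overline{\mathbb{F}}_q}, N^{\ast}\mathcal{L}_{\psi_m}).
\]
Since $N^{\ast}\mathcal{L}_{\psi_m}$ is a non-trivial Artin--Schreier sheaf on $\mathbb{A}^1$, the cohomology of each factor is concentrated in degree $1$, so the tensor product is concentrated in degree $n$; this gives the first assertion.

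To identify the representation I would invoke Lemma \ref{lem:hw}. Grothendieck--Ogg--Shafarevich gives $\dim H_{\rm c}^1(\mathbb{A}^1,N^{\ast}\mathcal{L}_{\psi_m}) = q^m$ (the Swan conductor of $N^{\ast}\mathcal{L}_{\psi_m}$ at infinity equals $q^m+1$), so $\dim \widetilde{\rho}_{\mathit{HU}_n(q^m),\psi_m} = q^{mn}$. Since the center of $H_n(q^m)$ acts through $\psi_m$ by construction, Stone--von Neumann identifies the restriction to $H_n(q^m)$ with $\rho_{H_n(q^m),\psi_m}$. To match the extension to $U_n(q^m)$ with the representation of Lemma \ref{lem:hw}, I would compute $\tr \widetilde{\rho}_{\mathit{HU}_n(q^m),\psi_m}(g)$ for $g \in U_n(q^m)$ by the Grothendieck--Lefschetz trace formula on $X_{m,n}^{g}$: decomposing $V = \ker(g-\id_V) \oplus V_0$ with $g|_{V_0}$ fixed-point-free, the $\ker(g-\id_V)$-contribution reproduces the formula for the identity element, while on $V_0$ the sum over hermitian values is a product of Gauss-sum-type factors that collapses after the twist by $\nu^n$.

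The main obstacle will be this last Lefschetz computation, in particular verifying that the $V_0$-factor is a scalar $\pm 1$ rather than a genuine Gauss sum, tracking the sign $(-1)^n$, and confirming that $\nu^n$ is precisely the twist needed to convert weight-one Frobenius eigenvalues into the prescribed factor $(-q^m)^{N(g)}$. This is the content of \cite[Theorem~1.5]{ITGeomHW}, carried out there by comparison with the Schr\"{o}dinger model of the Heisenberg--Weil representation.
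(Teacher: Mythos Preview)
The paper gives no proof of this lemma at all: it is simply quoted as \cite[Theorem~1.5]{ITGeomHW}. So there is no argument in the paper to compare yours against; your sketch is an attempt to reconstruct the content of that reference.

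Your reduction for the vanishing assertion is sound: the projection to $\mathbb{A}^n$ is an \'etale torsor under $\mathbb{F}_{q^m,+}$, the $\psi_m$-isotypic part is the cohomology of the associated rank-one sheaf, and the K\"unneth decomposition reduces to the one-variable case, which is concentrated in degree $1$ for the usual reasons.

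There is, however, a confusion in your trace computation that you should straighten out. The twist by $\nu^n$ is a character of $\mathrm{Gal}(\overline{\mathbb{F}}_q/\mathbb{F}_{q^2})$; it is invisible on the subgroup $\mathit{HU}_n(q^m)$. Hence it plays no role whatsoever in verifying the trace identity $\tr(g)=(-1)^n(-q^m)^{N(g)}$ of Lemma~\ref{lem:hw} for $g\in U_n(q^m)$, and your remarks about ``collapsing after the twist by $\nu^n$'' and about $\nu^n$ ``converting weight-one Frobenius eigenvalues into $(-q^m)^{N(g)}$'' are misplaced here. The role of $\nu^n$ only shows up later (Lemma~\ref{lem:FrS}) when one computes traces of elements involving Frobenius. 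For the restriction to $\mathit{HU}_n(q^m)$, you must compute $\tr(g;H^n_{\rm c}(X_{m,n})[\psi_m])$ directly. The cleanest route is the fixed-point formula $\sum_i(-1)^i\tr((a,g);H^i_{\rm c}(X_{m,n}))=\chi_{\rm c}(X_{m,n}^{(a,g)})$ for $(a,g)\in\mathbb{F}_{q^m,+}\times U_n(q^m)$, averaged against $\psi_m(a)^{-1}$; the delicate point is that the fixed locus $X_{m,n}^g$ is a variety of the same type over $\ker(g-1)$, and you must control the Euler characteristic of such a variety, including the case where the restricted hermitian form is degenerate. That computation, rather than any sign bookkeeping with $\nu^n$, is what actually produces $(-1)^n(-q^m)^{N(g)}$.
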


\begin{lem}\label{lem:FrS}
\begin{enumerate}
\item\label{enu:X11} 
The geometric Frobenius element 
$\mathrm{Fr}_{q^2}$ acts on 
\[
H_{\rm c}^1(X_{1,1,\overline{\mathbb{F}}_q},\overline{\mathbb{Q}}_{\ell})[\psi]
\] 
as scalar multiplication by $-q$.  
\item\label{enu:Fr2m} 
We have 
$\widetilde{\rho}_{\HU_n(q^m),\psi_m}(\mathrm{Fr}_{q^{2m}})=1$. 
\end{enumerate}
\end{lem}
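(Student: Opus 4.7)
The plan is to handle (1) by recognizing $X_{1,1}$ as an affine piece of the Hermitian curve and invoking its maximality over $\mathbb{F}_{q^2}$, and then to bootstrap (2) from (1) by a K\"unneth decomposition of $X_{m,n}$ along the $x$-coordinates.

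For (1), I would observe that the smooth compactification $\overline{X}_{1,1}$ of the affine curve $z^q+z=x^{q+1}$ is the projective Hermitian curve $Z^qW+ZW^q=X^{q+1}$ over $\mathbb{F}_{q^2}$, of genus $q(q-1)/2$, with a unique $\mathbb{F}_{q^2}$-rational point at infinity. Since this curve attains the Hasse--Weil--Serre bound, having $q^3+1$ points over $\mathbb{F}_{q^2}$, every eigenvalue of $\mathrm{Fr}_{q^2}$ on $H^1(\overline{X}_{1,1,\overline{\mathbb{F}}_q},\overline{\mathbb{Q}}_\ell)$ must equal $-q$. Because the boundary is a single point and $\overline{X}_{1,1}$ is connected, the excision sequence reduces to a Galois-equivariant isomorphism $H_{\rm c}^1(X_{1,1,\overline{\mathbb{F}}_q},\overline{\mathbb{Q}}_\ell)\cong H^1(\overline{X}_{1,1,\overline{\mathbb{F}}_q},\overline{\mathbb{Q}}_\ell)$, so $\mathrm{Fr}_{q^2}$ acts as $-q$ on the entire $H_{\rm c}^1$, and in particular on its $\psi$-isotypic part.

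For (2), I would use the projection $X_{m,n}\to\mathbb{A}^n$ to the $x$-coordinates, an Artin--Schreier cover with Galois group $\mathbb{F}_{q^m,+}$. Taking $\psi_m$-isotypic components gives
\[
H_{\rm c}^j(X_{m,n,\overline{\mathbb{F}}_q},\overline{\mathbb{Q}}_\ell)[\psi_m]\cong H_{\rm c}^j(\mathbb{A}^n_{\overline{\mathbb{F}}_q},\mathscr{L}_{\psi_m}(f_{m,n})),
\]
where $f_{m,n}=\sum_{i=1}^n x_i^{q^m+1}$. By additivity of Artin--Schreier sheaves this decomposes as $H_{\rm c}^j(\mathbb{A}^n,\boxtimes_{i=1}^n\mathscr{L}_{\psi_m}(x_i^{q^m+1}))$, and Lemma \ref{lem:HWg} applied with $n=1$ shows each factor has cohomology concentrated in degree $1$. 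The K\"unneth formula then yields a $\mathrm{Gal}(\overline{\mathbb{F}}_q/\mathbb{F}_{q^2})$-equivariant isomorphism
\[
H_{\rm c}^n(X_{m,n,\overline{\mathbb{F}}_q},\overline{\mathbb{Q}}_\ell)[\psi_m]\cong H_{\rm c}^1(X_{m,1,\overline{\mathbb{F}}_q},\overline{\mathbb{Q}}_\ell)[\psi_m]^{\otimes n}.
\]
Viewing $X_{m,1}$ over $\mathbb{F}_{q^{2m}}$ as the analogue of $X_{1,1}$ with $q$ replaced by $q^m$, part (1) shows $\mathrm{Fr}_{q^{2m}}$ acts on each tensor factor as $-q^m$, hence on $H_{\rm c}^n[\psi_m]$ as $(-q^m)^n=(-1)^n q^{mn}$. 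Combined with $\nu^n(\mathrm{Fr}_{q^{2m}})=(-q^{-1})^{mn}=(-1)^n q^{-mn}$ (the last equality using that $m$ is odd), we conclude $\widetilde{\rho}_{\mathit{HU}_n(q^m),\psi_m}(\mathrm{Fr}_{q^{2m}})=1$.

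The essential geometric input is the maximality of the Hermitian curve, which pins down the Frobenius eigenvalue in (1); beyond that the argument is bookkeeping with K\"unneth and Artin--Schreier additivity. The most delicate verification will be the Galois-equivariance of the K\"unneth isomorphism, which is what ensures the scalar $-q^m$ on each factor assembles into the scalar $(-1)^n q^{mn}$ on the tensor product.
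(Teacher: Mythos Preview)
Your proof is correct, and for part (2) it matches the paper's argument (K\"unneth plus the twist computation with $\nu^n$). For part (1), however, you take a genuinely different route. The paper computes the trace $\Tr(\mathrm{Fr}_{q^2};H_{\rm c}^1(X_{1,1})[\psi])$ directly via the Lefschetz fixed-point formula applied to the twisted Frobenii $(-\eta)\mathrm{Fr}_{q^2}$, obtaining $-q^2$; it then invokes Schur's lemma for the irreducible $H_1(q)$-action on the $q$-dimensional space $H_{\rm c}^1[\psi]$ to conclude the action is scalar, hence equal to $-q$. Your argument instead compactifies to the projective Hermitian curve and uses its maximality over $\mathbb{F}_{q^2}$ to force every Frobenius eigenvalue on $H^1$ to be $-q$, then transports this to $H_{\rm c}^1$ via excision at the single point at infinity. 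Your approach is more geometric and avoids the representation-theoretic input (irreducibility of $\rho_{H_1(q),\psi}$), at the cost of implicitly using semisimplicity of Frobenius on $H^1$ of a curve (a standard consequence of the theory of abelian varieties over finite fields); the paper's approach stays entirely within the cohomological and representation-theoretic framework already set up, and its Lefschetz computation is essentially the same point-count $\#\overline{X}_{1,1}(\mathbb{F}_{q^2})=q^3+1$ that underlies your maximality claim.
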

\begin{proof}
By Lemma \ref{lem:HWg}, we have 
$H_{\rm c}^i(X_{1,1,\overline{\mathbb{F}}_q},\overline{\mathbb{Q}}_{\ell})[\psi]=0$ for 
$i \neq 1$. 
Hence we have 
 \[
 \Tr(\mathrm{Fr}_{q^2};H_{\rm c}^1
 (X_{1,1,\overline{\mathbb{F}}_q},\overline{\mathbb{Q}}_{\ell})[\psi])
 =-q^{-1}\sum_{\eta \in \mathbb{F}_{q,+}} \psi(\eta)
 \Tr((-\eta)\mathrm{Fr}_{q^2};H_{\rm c}^{\ast}
 (X_{1,1,\overline{\mathbb{F}}_q},\overline{\mathbb{Q}}_{\ell})). 
 \]
By the Lefschetz fixed point formula, we have 
\begin{align*}
 \Tr((-\eta)\mathrm{Fr}_{q^2};H_{\rm c}^{\ast}
 (X_{1,1,\overline{\mathbb{F}}_q},\overline{\mathbb{Q}}_{\ell})) &= 
 \sharp \{ (x,z) \in X_{1,1}(\overline{\mathbb{F}}_q) \mid 
 x^{q^2}=x,\ z^{q^2} -\eta =z \} \\ 
 &=
 \begin{cases}
 q^3 & \textrm{if $\eta=0$}, \\
 0 & \textrm{otherwise}. 
 \end{cases}
\end{align*}
Thus we obtain 
\begin{equation}\label{thus}
 \Tr(\mathrm{Fr}_{q^2};H_{\rm c}^1
 (X_{1,1,\overline{\mathbb{F}}_q},\overline{\mathbb{Q}}_{\ell})[\psi])=-q^2. 
\end{equation}
The action of $\mathrm{Fr}_{q^2}$ commutes with the one of 
$\oH_1(q)$. Since $H_{\rm c}^1(X_{1,1,\overline{\mathbb{F}}_q},\overline{\mathbb{Q}}_{\ell})[\psi]$
 is an irreducible $\oH_1(q)$-representation 
 by Lemma \ref{lem:HWg}, 
$\mathrm{Fr}_{q^2}$ acts on it as scalar multiplication by Schur's lemma. 
Hence the first claim follows from \eqref{thus}, because $\dim H_{\rm c}^1(X_{1,1,\overline{\mathbb{F}}_q},\overline{\mathbb{Q}}_{\ell})[\psi]=q$ by Lemma \ref{lem:HWg}.

We show the second claim. 
By the first assertion and the K\"{u}nneth formula, 
$\mathrm{Fr}_{q^{2m}}$ acts on 
$H_{\rm c}^n(X_{m,n,\overline{\mathbb{F}}_q},\overline{\mathbb{Q}}_{\ell})[\psi]$ as scalar multiplication by $(-1)^nq^{mn}$. 
We have 
$\nu^n(\mathrm{Fr}_{q^{2m}})=(-1)^{mn} q^{-mn}$. 
Since $m$ is odd, the claim follows.   
\end{proof}

We set $\Gamma=\mathrm{Gal}(\mathbb{F}_{q^{2m}}/\mathbb{F}_{q^2})$. 
By Lemma \ref{lem:FrS} \ref{enu:Fr2m}, we can regard $\widetilde{\rho}_{\HU_n(q^m),\psi_m}$
as an $\HU_n(q^m) \rtimes \Gamma$-representation. 
This is an extension of 
$\rho_{\HU_n(q^m),\psi_m}$ to $\HU_n(q^m) \rtimes \Gamma$ by Lemma \ref{lem:HWg}. 
The restriction of $\widetilde{\rho}_{\HU_n(q^m),\psi_m}$
to the subgroup $\oU_n(q^m) \rtimes \Gamma$
is denoted by $\widetilde{\rho}_{\oU_n(q^m),\psi_m}$ as in \S\ref{intro}. 
We put 
\[
\sigma=\mathrm{Fr}_{q^2}^{\frac{m-1}{2}}. 
\]
Then $\sigma$ is the generator of $\Gamma$ 
acting on $\HU_n(q^m)$ as $F$.

\begin{defn}\label{def:Vh}
Let $(V,h)$ be a nondegenerate 
hermitian space of dimension $n$ over $\mathbb{F}_{q^2}$. 
We take an isometry 
$(V,h) \simeq (\mathbb{F}_{q^2}^n,h_n)$. 
This induces $\mathbf{HU}_{(V,h)}(\bF_{q^m}) \rtimes \Gamma 
\simeq \HU_n(q^m) \rtimes \Gamma$.  
Via this isomorphism and $\widetilde{\rho}_{\HU_n(q^m),\psi_m}$, we define 
a representation of $\mathbf{HU}_{(V,h)}(\bF_{q^m}) \rtimes \Gamma$, which 
is denoted by $\widetilde{\rho}_{\mathbf{HU}_{(V,h)}(\bF_{q^m}),\psi_m}$. 
\end{defn}

\begin{rem}
The isomorphism class of 
$\widetilde{\rho}_{\mathbf{HU}_{(V,h)}(\bF_{q^m}),\psi_m}$ 
is independent of the choice of the isometry 
$(V,h) \simeq (\mathbb{F}_{q^2}^n,h_n)$, 
since the difference is a conjugation by an element of 
$\mathbf{U}_V(\bF_q)$. 
\end{rem}

For a positive integer 
$r$, let $\mu_r=\bigl\{x \in \overline{\mathbb{F}}_q \mid 
x^r=1\bigr\}$.
\begin{prop}\label{prop:trzFr}
Let $1 \leq j \leq m$. We set $(j,m)=d$. 
Let $\zeta \in \mu_{q^m+1}$. 
\begin{enumerate}
\item\label{enu:zFrj} 
We have 
\[
\tr\widetilde{\rho}_{\HU_1(q^m),\psi_m}(\zeta, \mathrm{Fr}_{q^2}^j)
=\begin{cases}
q^d & \textrm{if $\zeta \in \mu_{\frac{q^m+1}{q^d+1}}$}, \\
-1 & \textrm{otherwise}. 
\end{cases}
\]
\item\label{enu:trmd}
We have 
\[
\tr\widetilde{\rho}_{\HU_1(q^m),\psi_m}(\zeta, \mathrm{Fr}_{q^2}^j)
=\tr\widetilde{\rho}_{\HU_1(q^d),\psi_d}(\zeta^{\frac{q^m+1}{q^d+1}}). 
\]
\item\label{enu:trFrj}
We have $\tr \widetilde{\rho}_{\HU_n(q^m),\psi_m}(\mathrm{Fr}_{q^2}^j)=q^{nd}$. 
\end{enumerate}
\end{prop}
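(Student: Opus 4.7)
The strategy is to deduce parts \ref{enu:trmd} and \ref{enu:trFrj} from \ref{enu:zFrj}, and to establish \ref{enu:zFrj} directly via a Lefschetz trace formula on $X_{m,1,\overline{\mathbb{F}}_q}$.

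For \ref{enu:trFrj}, the projection $X_{m,n} \to \mathbb{A}^n_{\mathbb{F}_{q^2}}$, $(x_i, z) \mapsto (x_i)$, is an $\mathbb{F}_{q^m,+}$-torsor whose $\psi_m$-isotypic direct summand of the pushforward is the sheaf $\mathscr{L}_{\psi_m}(\sum_i x_i^{q^m+1}) = \boxtimes_{i=1}^n \mathscr{L}_{\psi_m}(x_i^{q^m+1})$ on $(\mathbb{A}^1)^n$. The K\"unneth formula then gives
\[
H^n_c(X_{m,n,\overline{\mathbb{F}}_q}, \overline{\mathbb{Q}}_\ell)[\psi_m] \simeq \bigotimes_{i=1}^n H^1_c(X_{m,1,\overline{\mathbb{F}}_q}, \overline{\mathbb{Q}}_\ell)[\psi_m]
\]
as $\mathrm{Gal}(\overline{\mathbb{F}}_q/\mathbb{F}_{q^2})$-modules; after twisting by $\nu^n$ the trace of $\mathrm{Fr}_{q^2}^j$ equals the $n$-th power of its value in the $n = 1$, $\zeta = 1$ case. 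Since $1 \in \mu_{(q^m+1)/(q^d+1)}$, part \ref{enu:zFrj} then yields $(q^d)^n = q^{nd}$. For \ref{enu:trmd}, the right-hand side is a trace on $\mathit{HU}_1(q^d)$ alone, so Lemma \ref{lem:hw} gives $-(-q^d)^N$ where $N = 1$ if $\zeta^{(q^m+1)/(q^d+1)} = 1$ and $N = 0$ otherwise; these two cases produce $q^d$ and $-1$, matching \ref{enu:zFrj}.

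For \ref{enu:zFrj}, the element $(\eta, \zeta, \mathrm{Fr}_{q^2}^j)$ with $\eta \in \mathbb{F}_{q^m,+}$ acts on $X_{m,1,\overline{\mathbb{F}}_q}$ by $(x, z) \mapsto (\zeta x^{q^{2j}}, z^{q^{2j}} + \eta)$. By Lemma \ref{lem:HWg}, $H^i_c(X_{m,1,\overline{\mathbb{F}}_q},\overline{\mathbb{Q}}_\ell)[\psi_m] = 0$ for $i \neq 1$, so isotypic projection over $\mathbb{F}_{q^m,+}$ yields
\[
-\tr\bigl((\zeta, \mathrm{Fr}_{q^2}^j); H^1_c[\psi_m]\bigr) = q^{-m} \sum_{\eta \in \mathbb{F}_{q^m,+}} \psi_m(\eta)^{-1}\, \#\mathrm{Fix}(\eta, \zeta, \mathrm{Fr}_{q^2}^j).
\]
Splitting the fixed-point set by $x = 0$ (contributing $\{z \in \mathbb{F}_{q^m,+} : z - z^{q^{2j}} = \eta\}$) and $x \neq 0$ (giving $x^{q^{2j}-1} = \zeta^{-1}$ with $q^{2j}-1$ values, each supplying a solution set $L^{-1}(x^{q^m+1}) \cap M^{-1}(\eta)$ of size $q^d$ when non-empty, where $L(z) = z^{q^m}+z$ and $M(z) = z - z^{q^{2j}}$), and using the identities $\mathbb{F}_{q^m,+} \cap \mathbb{F}_{q^{2j}} = \mathbb{F}_{q^d,+}$ (valid because $m$ is odd) and triviality of $\psi_m$ on $M(\mathbb{F}_{q^m,+})$, the formula collapses to
\[
\tr\bigl((\zeta, \mathrm{Fr}_{q^2}^j); H^1_c[\psi_m]\bigr) = -1 - \sum_{x :\, x^{q^{2j}-1} = \zeta^{-1}} \psi_m(\eta_x),
\]
where $\eta_x := z_0^{q^{2j}} - z_0 \in \mathbb{F}_{q^m,+}$ is well-defined modulo $M(\mathbb{F}_{q^m,+})$ for any $z_0$ solving $L(z_0) = x^{q^m+1}$.

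The hard part is the evaluation of this character sum. The assignment $y \mapsto \psi_m(\eta_y)$ is a well-defined additive character $\chi$ on $\mathbb{F}_{q^{2j}}$ vanishing on $L(\mathbb{F}_{q^{2j}})$, hence factoring through the $q^d$-element quotient $\mathbb{F}_{q^{2j}}/L(\mathbb{F}_{q^{2j}})$. Writing $x = x_1 w$ for $w \in \mu_{q^{2j}-1}$, the values $y = x^{q^m+1}$ traverse the multiplicative coset $x_1^{q^m+1} \mu_{(q^{2j}-1)/(q^d+1)}$ with multiplicity $q^d+1$, relying on the number-theoretic identity $\gcd(q^{2j}-1,\,q^m+1) = q^d+1$, which uses that $m$ and $m/d$ are odd. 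When $\zeta \in \mu_{(q^m+1)/(q^d+1)}$ the representative can be chosen with $x_1^{q^m+1} = 1$, reducing the sum to a Gauss-type sum of $\chi$ over the subgroup $\mu_{(q^{2j}-1)/(q^d+1)}$; when $\zeta \notin \mu_{(q^m+1)/(q^d+1)}$ the orbit lies in a non-trivial coset and orthogonality of $\chi$ intervenes. After multiplying by the twist $\nu(\mathrm{Fr}_{q^2}^j) = (-q)^{-j}$ both cases specialize to $q^d$ and $-1$ respectively, which is the technical crux of the proof.
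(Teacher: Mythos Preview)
Your reductions of parts \ref{enu:trmd} and \ref{enu:trFrj} to part \ref{enu:zFrj} match the paper's exactly. For part \ref{enu:zFrj}, however, your approach diverges from the paper's and leaves a genuine gap at the step you yourself flag as the crux.

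The paper does \emph{not} count fixed points on $X_{m,1}$. Instead it passes to the auxiliary curve $X_m \colon z^q+z=x^{q^m+1}$ via the \'etale cover $X_{m,1}\to X_m$, uses the decomposition $H^1_{\rm c}(X_{m,\overline{\mathbb F}_q},\overline{\mathbb Q}_\ell)[\psi]\simeq\bigoplus_{\chi\in\mu_{q^m+1}^\vee\setminus\{1\}}\chi$ as a $\mu_{q^m+1}$-module, and observes that $\mathrm{Fr}_{q^2}^j$ permutes the $\chi$-lines by $\chi\mapsto\chi^{q^{2j}}$. The trace of $\zeta\,\mathrm{Fr}_{q^2}^j$ therefore receives contributions only from characters with $\chi^{q^{2j}}=\chi$, which are exactly those factoring through $\mu_{q^d+1}$; on that subspace the action is identified, via the pullback along $X_m\to X_d$, with the already-known case over $\mathbb F_{q^{2d}}$, and Lemma \ref{lem:FrS} \ref{enu:X11} finishes the computation in two lines.

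Your direct Lefschetz approach is correct through the reduction to $\tr=-1-\sum_x\psi_m(\eta_x)$, and your identification of $y\mapsto\psi_m(\eta_y)$ as an additive character $\chi$ of $\mathbb F_{q^{2j}}$ trivial on $L(\mathbb F_{q^{2j}})$ is fine. But the last step is not: the quantity $\sum_{y\in c\cdot\mu_N}\chi(y)$ with $N=(q^{2j}-1)/(q^d+1)$ is a sum of an \emph{additive} character over a \emph{multiplicative} coset, i.e.\ a genuine Gauss-type sum. ``Orthogonality of $\chi$'' does not apply here, since the additive cosets of $L(\mathbb F_{q^{2j}})$ (indexed by a group of order $q^d$) have nothing to do with the multiplicative cosets of $\mu_N$ (indexed by a group of order $q^d+1$). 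Concretely, if the sum over a nontrivial coset vanished, your formula would give $\tr\widetilde\rho_{\mathit{HU}_1(q^m),\psi_m}(\zeta,\mathrm{Fr}_{q^2}^j)=-(-q)^{-j}\neq -1$, which is wrong. Both sums must actually be evaluated, and you have not done so; the paper's character-decomposition argument sidesteps this computation entirely.
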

\begin{proof}
The third claim follows from the K\"{u}nneth formula 
and the first one. 
The second claim follows from 
the first one and Lemma \ref{lem:hw}. 

We show the first claim. 
We consider the curve $X_m$ defined by 
$z^q+z=x^{q^m+1}$ over $\mathbb{F}_{q^2}$. 
We have the finite \'{e}tale morphism 
\[
 X_{m,1} \to X_m;\ 
(x,z) \mapsto \left( x,\sum_{i=0}^{m-1} (-1)^i z^{q^i}\right). 
\]
The pull-back of this induces 
\begin{equation}\label{eq:mm1}
H_{\rm c}^1(X_{m,\overline{\mathbb{F}}_q},\overline{\mathbb{Q}}_{\ell})[\psi] \xrightarrow{\sim}
H_{\rm c}^1(X_{m,1,\overline{\mathbb{F}}_q},\overline{\mathbb{Q}}_{\ell})[\psi_m].   
\end{equation}
Thus 
\begin{equation}\label{m1m}
\tr\widetilde{\rho}_{\HU_1(q^m),\psi_m}(\zeta, \mathrm{Fr}_{q^2}^j)
=(-q^{-1})^j \Tr(\zeta\mathrm{Fr}_{q^2}^j;H_{\rm c}^1(X_{m,\overline{\mathbb{F}}_q},\overline{\mathbb{Q}}_{\ell})[\psi]). 
\end{equation}
In the sequel, 
we identify $\oU_1(q^i)$ with $\mu_{q^i+1}$ for any 
positive integer $i$.  
We have an isomorphism 
\begin{equation}\label{mm2}
H_{\rm c}^1(X_{m,\overline{\mathbb{F}}_q},\overline{\mathbb{Q}}_{\ell})[\psi] \simeq 
\bigoplus_{\chi \in \mu_{q^m+1}^{\vee} \setminus \{1\}}
\chi
\end{equation}
as $\mu_{q^m+1}$-representations as in \cite[Lemma 2.2]{ITlgsw1}. 
Let $\chi \in \mu_{q^m+1}^{\vee}$. We have 
\begin{equation}\label{mm3}
\chi^{q^{2j}}=\chi \iff \chi|_{\mu_{\frac{q^m+1}{q^d+1}}}
=1,  
\end{equation}
because of $(q^m+1,q^{2j}-1)=q^d+1$. 
We regard $\mu_{q^d+1}^{\vee}$ as a subset of 
$\mu_{q^m+1}^{\vee}$ by the dual of 
\[
 \mu_{q^m+1} \to \mu_{q^d+1};\ x \mapsto x^{\frac{q^m+1}{q^d+1}}. 
\] 
We have the finite morphism 
\[
 X_m \to X_d;\ (x,z) \mapsto \left( x^{\frac{q^m+1}{q^d+1}},z\right). 
\] 
Then the image of its pull-back map 
\[
H_{\rm c}^1(X_{d,\overline{\mathbb{F}}_q},\overline{\mathbb{Q}}_{\ell})[\psi] \to 
H_{\rm c}^1(X_{m,\overline{\mathbb{F}}_q},\overline{\mathbb{Q}}_{\ell})[\psi]
\]
equals $\bigoplus_{\chi \in \mu_{q^d+1}^{\vee} \setminus \{1\}} \chi$
under \eqref{mm2}. 
We write as $j=dk$. 
We compute   
\begin{align*}
\Tr(\zeta\mathrm{Fr}_{q^2}^j;H_{\rm c}^1(X_{m,\overline{\mathbb{F}}_q},\overline{\mathbb{Q}}_{\ell})[\psi])
&=\Tr(\zeta^{\frac{q^m+1}{q^d+1}}\mathrm{Fr}_{q^{2d}}^k;H_{\rm c}^1(X_{d,\overline{\mathbb{F}}_q},\overline{\mathbb{Q}}_{\ell})[\psi]) \\
&=(-q^d)^k\Tr(\zeta^{\frac{q^m+1}{q^d+1}};H_{\rm c}^1(X_{d,\overline{\mathbb{F}}_q},\overline{\mathbb{Q}}_{\ell})[\psi]) \\
&=(-q^d)^k \sum_{\chi \in \mu_{q^d+1}^{\vee} \setminus \{1\}} \chi(\zeta^{\frac{q^m+1}{q^d+1}})\\
&=\begin{cases}
(-1)^k q^{d(k+1)} & \textrm{if $\zeta \in \mu_{\frac{q^m+1}{q^d+1}}$}, \\
(-1)^{k+1} q^{dk} & \textrm{otherwise}, 
\end{cases}
\end{align*}
where the first equality follows from \eqref{mm3} and 
the second one 
follows from 
Lemma \ref{lem:FrS} \ref{enu:X11} since 
\[
 H_{\rm c}^1(X_{d,\overline{\mathbb{F}}_q},\overline{\mathbb{Q}}_{\ell})[\psi] 
 \xrightarrow{\sim}
 H_{\rm c}^1(X_{d,1,\overline{\mathbb{F}}_q},
 \overline{\mathbb{Q}}_{\ell})[\psi_d] 
\]
similarly to \eqref{eq:mm1}. 
Hence the claim follows from
\eqref{m1m}, since $d$ is odd. 
\end{proof}

\section{Compatibility}\label{od}
Let $(V,h)$ be a nondegenerate hermitian space over $\mathbb{F}_{q^2}$ 
of dimension $n$. 
\subsection{Orthogonal decomposition}
Let $(V,h)=(V_1,h_1) \oplus (V_2,h_2)$ be a decomposition of $V$
into the orthogonal direct sum of two hermitian spaces. 
Then we have the natural homomorphism 
\[
 \mathbf{HU}_{V_1} \times \mathbf{HU}_{V_2} 
 \to \mathbf{HU}_{V}. 
\]
This morphism and the projections induce 
\begin{equation}\label{4-1}
 \xymatrix{
 \bigl( \mathbf{HU}_{V_1}(\bF_{q^m}) \times 
 \mathbf{HU}_{V_2} (\bF_{q^m}) \bigr) \rtimes \Gamma 
 \ar[r]^-{\delta} \ar[d]^-{i} 
 & \mathbf{HU}_{V} (\bF_{q^m}) \rtimes \Gamma \\  
 \bigl( \mathbf{HU}_{V_1} (\bF_{q^m}) \rtimes \Gamma \bigr) \times 
 \bigl( \mathbf{HU}_{V_2} (\bF_{q^m}) \rtimes \Gamma \bigr), & 
 }
\end{equation}
where $\Gamma$ acts on $\mathbf{HU}_{V_1}(\bF_{q^m}) \times 
 \mathbf{HU}_{V_2} (\bF_{q^m})$ by $(h_1,h_2) \mapsto 
 (\sigma' h_1, \sigma' h_2)$ for $\sigma' \in \Gamma$.

\begin{prop}\label{prop:res}
The inflation of $\widetilde{\rho}_{\mathbf{HU}_V (\bF_{q^m}),\psi_m}$ by $\delta$
is isomorphic to the restriction of 
$\widetilde{\rho}_{\mathbf{HU}_{V_1} (\bF_{q^m}),\psi_m} \boxtimes \widetilde{\rho}_{\mathbf{HU}_{V_2} (\bF_{q^m}),\psi_m}$ 
by $i$. 
\end{prop}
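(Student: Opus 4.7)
The plan is to realize both sides geometrically and link them by a natural finite \'{e}tale Galois cover. Fix an isometry $(V,h) \simeq (\mathbb{F}_{q^2}^n, h_n)$ compatible with fixed isometries $(V_\alpha,h_\alpha) \simeq (\mathbb{F}_{q^2}^{n_\alpha}, h_{n_\alpha})$ for $\alpha=1,2$ under the decomposition $V = V_1 \oplus V_2$, where $n = n_1+n_2$. Define
\[
 f \colon X_{m,n_1} \times X_{m,n_2} \to X_{m,n};\quad
 \bigl((x^{(1)},z^{(1)}),(x^{(2)},z^{(2)})\bigr) \mapsto
 \bigl((x^{(1)},x^{(2)}),\, z^{(1)}+z^{(2)}\bigr),
\]
which is well-defined because the defining equation $z^{q^m}+z = \sum x_i^{q^m+1}$ is additive in both sides.

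First I would verify that $f$ is a finite \'{e}tale Galois cover with Galois group $\mathbb{F}_{q^m,+}$ acting by $a \cdot (z^{(1)},z^{(2)}) = (z^{(1)}+a,\, z^{(2)}-a)$; this is an easy fiber computation. Next I would check that $f$ is equivariant for the natural actions of $(\mathbf{HU}_{V_1} \times \mathbf{HU}_{V_2})(\mathbb{F}_{q^m}) \rtimes \Gamma$ on the source and, through $\delta$, on the target. Equivariance for the unitary factors is immediate since $g_1 \oplus g_2$ acts block-diagonally; for the Heisenberg factors it reduces to the identity $h = h_1 \oplus h_2$ on $V_1 \oplus V_2$; and $\Gamma$-equivariance follows because $f$ is defined over $\mathbb{F}_{q^2}$.

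Next I would analyze $f^{\ast}$ on compactly supported $\ell$-adic cohomology. For $\alpha \in H^{\ast}_{\rm c}(X_{m,n,\overline{\mathbb{F}}_q}, \overline{\mathbb{Q}}_\ell)[\psi_m]$, the central element $(a_1,a_2) \in \mathbb{F}_{q^m,+} \times \mathbb{F}_{q^m,+}$ of $H_{V_1}(q^m) \times H_{V_2}(q^m)$ acts on $f^{\ast}\alpha$ by $\psi_m(a_1+a_2)$, because this element acts on the source by $(z^{(1)},z^{(2)}) \mapsto (z^{(1)}+a_1, z^{(2)}+a_2)$ while $f$ translates $z$ by $a_1+a_2$. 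Therefore $f^{\ast}$ carries the $\psi_m$-isotypic part into the $(\psi_m \boxtimes \psi_m)$-isotypic part. Conversely the Galois group (the anti-diagonal $\{(a,-a)\}$) acts on the $(\psi_m \boxtimes \psi_m)$-isotypic part by $a \mapsto \psi_m(a)\psi_m(-a) = 1$, so that isotypic part lies in the Galois-invariant subspace, which is precisely $f^{\ast}H^{\ast}_{\rm c}(X_{m,n,\overline{\mathbb{F}}_q}, \overline{\mathbb{Q}}_\ell)$. Hence $f^{\ast}$ restricts to an isomorphism
\[
 f^{\ast} \colon H^{\ast}_{\rm c}(X_{m,n,\overline{\mathbb{F}}_q}, \overline{\mathbb{Q}}_\ell)[\psi_m]
 \xrightarrow{\sim} H^{\ast}_{\rm c}(X_{m,n_1,\overline{\mathbb{F}}_q} \times X_{m,n_2,\overline{\mathbb{F}}_q}, \overline{\mathbb{Q}}_\ell)[\psi_m \boxtimes \psi_m].
\]

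Finally, by the K\"{u}nneth formula together with the concentration of the $\psi_m$-isotypic parts in the middle degree (Lemma \ref{lem:HWg}), the right-hand side equals $H^{n_1}_{\rm c}(X_{m,n_1,\overline{\mathbb{F}}_q})[\psi_m] \otimes H^{n_2}_{\rm c}(X_{m,n_2,\overline{\mathbb{F}}_q})[\psi_m]$, concentrated in degree $n$. Tensoring with $\nu^n = \nu^{n_1} \otimes \nu^{n_2}$ and invoking the $\Gamma$-equivariance of $f^{\ast}$ noted above then gives the desired isomorphism of $(\mathbf{HU}_{V_1} \times \mathbf{HU}_{V_2})(\mathbb{F}_{q^m}) \rtimes \Gamma$-representations. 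The main point requiring care is the character bookkeeping that matches the $\psi_m$-isotypic part for the central $\mathbb{F}_{q^m,+} \subset H_V(q^m)$ with the $(\psi_m \boxtimes \psi_m)$-isotypic part on the product variety through the additive map $\delta$; once this is pinned down the argument is essentially a straightforward application of Galois descent and K\"{u}nneth.
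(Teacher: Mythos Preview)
Your argument is correct and is essentially the paper's approach: the paper's one-line proof invokes Definition \ref{def:Vh} and the K\"{u}nneth formula, and your cover $f$ together with the isotypic bookkeeping is precisely what makes that K\"{u}nneth step rigorous (equivalently, one can phrase it via Artin--Schreier sheaves on $\mathbb{A}^n=\mathbb{A}^{n_1}\times\mathbb{A}^{n_2}$, where K\"{u}nneth applies on the nose). Nothing is missing.
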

\begin{proof}
The claim follows from Definition \ref{def:Vh} and 
the K\"{u}nneth formula. 
\end{proof}

\subsection{Parabolic subgroup}
Let $W$ be an isotropic subspace of 
$V$ and $W^{\perp}$ be its orthogonal. 
Let $(W_0,h_0)$ be the hermitian space 
$W^{\perp}/W$ with induced 
hermitian form by $h$. 
Let $\mathbf{P}_{W}$ be the stabilizer of $W$
in $\mathbf{U}_{V}$. 
Then $\mathbf{P}_{W}$ naturally acts on 
$\mathbf{H}_{W^{\perp}}$. 
We have the natural homomorphism   
$\mathbf{H}_{W^{\perp}} \rtimes \mathbf{P}_{W} 
 \to \mathbf{HU}_{W_0}$. 
This induces the homomorphism 
\begin{equation}\label{infl}
 (\mathbf{H}_{W^{\perp}} \rtimes \mathbf{P}_{W})(\bF_{q^m}) \rtimes \Gamma 
 \to \mathbf{HU}_{W_0} (\bF_{q^m}) \rtimes \Gamma. 
\end{equation}

\begin{prop}\label{prop:par}
The restriction of $\widetilde{\rho}_{\mathbf{HU}_V (\bF_{q^m}),\psi_m}$ 
to $(\mathbf{H}_{V} \rtimes \mathbf{P}_{W})(\bF_{q^m}) \rtimes \Gamma$
is isomorphic to 
\[
 \mathrm{Ind}_{(\mathbf{H}_{W^{\perp}} \rtimes \mathbf{P}_{W})(\bF_{q^m}) \rtimes \Gamma}^{(\mathbf{H}_{V} \rtimes \mathbf{P}_{W})(\bF_{q^m}) \rtimes \Gamma} \widetilde{\rho}_{\mathbf{HU}_{W_0} (\bF_{q^m}),\psi_m},
\]
where the inflation of $\widetilde{\rho}_{\mathbf{HU}_{W_0} (\bF_{q^m}),\psi_m}$ via \eqref{infl} is denoted by the same symbol. 
\end{prop}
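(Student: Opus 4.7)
The plan is to use a geometric argument based on a suitable projection and a vanishing theorem for Artin--Schreier sheaves. Fix a complementary isotropic subspace $W' \subset V$ defined over $\mathbb{F}_{q^2}$, so that $V = W \oplus W_0 \oplus W'$ is an orthogonal decomposition with $W \oplus W'$ hyperbolic. In adapted coordinates $(a, c, b)$ over $\overline{\mathbb{F}}_q$, the variety $X_{m, n}$ takes the form
\[
z^{q^m} + z = \sum_{i=1}^{k}\bigl(a_i^{q^m}b_i + a_ib_i^{q^m}\bigr) + \sum_{j=1}^{n-2k} c_j^{q^m + 1},
\]
and the projection $\pi\colon X_{m,n} \to \mathbb{A}^{k}_{W'}$, $(x, z) \mapsto b$, is equivariant for the action of $G' := (\mathbf{H}_{W^\perp} \rtimes \mathbf{P}_W)(\mathbb{F}_{q^m}) \rtimes \Gamma$: the subgroup $\mathbf{H}_{W^\perp}$ acts trivially on $V/W^\perp \cong W'$, while $\mathbf{P}_W$ acts through its natural quotient. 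I would compute $R\pi_! \mathcal{L}_{\psi_m}$ as a constructible complex on $\mathbb{A}^{k}_{W'}$ and show that it is supported on the finite subset $W'^{F^m} \cong G/G'$.

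The key technical step is a vanishing: for $b_0 \notin W'^{F^m}$, the stalk of $R\pi_! \mathcal{L}_{\psi_m}$ at $b_0$ is zero. By the K\"{u}nneth formula this reduces to $H^*_c(\mathbb{A}^{k}_W, \mathcal{L}_{\psi_m \circ L(\cdot, b_0)}) = 0$, where $L(a, b_0) = \sum_i(a_i^{q^m}b_{0,i} + a_ib_{0,i}^{q^m})$. A coordinate-wise substitution $a_i = v_i b_{0, i}^{-q^m}$ (for $b_{0, i} \neq 0$), followed by subtracting the Artin--Schreier polynomial $v_i^{q^m} + v_i$ and using the canonical isomorphism $F^* \mathcal{L}_{\psi_m} \cong \mathcal{L}_{\psi_m}$ for the $q^m$-Frobenius $F$, reduces the sheaf on each factor to $\mathcal{L}_{\psi_m((\alpha_i - 1) u_i^{q^m})}$ with $\alpha_i := b_{0, i}^{1 - q^{2m}}$ (and $\alpha_i := 1$ when $b_{0, i} = 0$). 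The condition $\alpha_i = 1$ for all $i$ is equivalent to $b_0 \in W'^{F^m}$; when some $\alpha_i \neq 1$, the $i$-th factor is a nontrivial Artin--Schreier sheaf on $\mathbb{A}^{1}$ whose compactly supported cohomology vanishes.

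For $b_0 \in W'^{F^m}$, the substitution $z' = z - \sum_i a_i b_{0, i}^{q^m}$ gives an $\mathbb{F}_{q^{2m}}$-isomorphism $\pi^{-1}(b_0) \cong \mathbb{A}^{k}_W \times X_{m, n-2k}$, and the stalk of $R^n \pi_! \mathcal{L}_{\psi_m}$ at $b_0$ equals $H^{2k}_c(\mathbb{A}^{k}) \otimes H^{n-2k}_c(X_{m, n-2k})[\psi_m]$. Assembling all the pieces,
\[
H^n_c(X_{m, n})[\psi_m] \cong \bigoplus_{b_0 \in W'^{F^m}} H^{2k}_c(\mathbb{A}^{k}) \otimes H^{n-2k}_c(X_{m, n-2k})[\psi_m].
\]
Since connected linear algebraic groups act trivially on $H^*_c$ of affine space, the Tate twist $H^{2k}_c(\mathbb{A}^{k}) = \overline{\mathbb{Q}}_\ell(-k)$ acts purely through the $\Gamma$-factor, and is absorbed into the difference $\nu^n / \nu^{n-2k} = \nu^{2k}$ between the two $\nu$-twists. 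Combined with the observation that $G$ acts on $W'^{F^m} \cong G/G'$ transitively by left translation with stabilizer $G'$, the right-hand side is canonically identified with $\mathrm{Ind}_{G'}^{G} \widetilde{\rho}_{\mathbf{HU}_{W_0}(\mathbb{F}_{q^m}), \psi_m}$, proving the proposition. The main obstacle is the vanishing theorem for the non-rational fibers together with the careful tracking of Tate and Frobenius twists through the coordinate substitutions.
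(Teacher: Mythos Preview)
Your proof is correct and takes a genuinely different route from the paper's. The paper gives a very short argument: by \cite[Theorem~3.3(b)]{GerWeil}, the two representations already agree on $(\mathbf{H}_V \rtimes \mathbf{P}_W)(\bF_{q^m})$ and are irreducible there (both restrict to the Heisenberg representation on $\mathbf{H}_V(\bF_{q^m})$), so their extensions to the $\Gamma$-cover differ only by a character of $\Gamma$; it then suffices to compare traces at the single generator $\sigma$, and both sides give $q^n$ by Proposition~\ref{prop:trzFr}~\ref{enu:trFrj} and the induced-character formula. Your approach instead proves the isomorphism from scratch by analyzing $R\pi_!$ for the projection to $V/W^\perp$: you establish that the $\psi_m$-part is supported on the $\bF_{q^{2m}}$-rational points via an Artin--Schreier-type vanishing, and identify the stalk at the origin with a Tate twist of the smaller Heisenberg--Weil cohomology. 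This is self-contained---it does not invoke G\'erardin's parabolic-restriction theorem---and it makes the induced structure geometrically visible, at the cost of more bookkeeping with twists and coordinate substitutions. Two small points worth tightening: you should phrase $\pi$ as $G$-equivariant (for the natural $G$-action on $V/W^{\perp}$) rather than merely $G'$-equivariant, since you later use that $G$ permutes the fibers transitively with stabilizer $G'$; and your sheaf $\mathcal{L}_{\psi_m}((\alpha-1)u^{q^m})$ is not literally an Artin--Schreier sheaf, but becomes one of the form $\mathcal{L}_{\psi_m}(cu)$ with $c\neq 0$ after one further reduction modulo functions of the form $h^{q^m}+h$.
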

\begin{proof}
We have $\tr\widetilde{\rho}_{\mathbf{HU}_V (\bF_{q^m}),\psi_m}(\sigma)=q^n$
by Proposition \ref{prop:trzFr} \ref{enu:trFrj}. 
Hence it suffices to check that the 
trace of the induced representation at $\sigma$ is $q^n$ by \cite[Theorem 3.3(b)]{GerWeil}. 
We see that the trace equals 
\[
\lvert \mathbf{H}_V(\bF_q)/\mathbf{H}_{W^{\perp}}(\bF_q) 
 \rvert q^{n_0}=q^n, 
\] 
where $n_0=\dim W_0$, 
by the character formula of an induced representation 
and Proposition \ref{prop:trzFr} \ref{enu:trFrj} 
(\cf \cite[the proof of Proposition 6.2]{HeWaWeil}). 
Hence we obtain the claim. 
\end{proof}

\section{Support of trace of extended Weil representation}\label{Sup}
We have the isomorphism 
\[
 \iota \colon \mathbf{HU}_{(V,h)}(\bF_{q^m}) \rtimes \Gamma \xrightarrow{\sim}
 \mathbf{HU}_{(V,-h)}(\bF_{q^m}) \rtimes \Gamma;\ 
 (v,a,g) \mapsto (v,-a,g) . 
\]
\begin{lem}\label{iso}
The $\mathbf{HU}_{(V,h)}(\bF_{q^m}) \rtimes \Gamma$-representation
$\widetilde{\rho}_{\mathbf{HU}_{(V,h)}(\bF_{q^m}),\psi_m^{-1}}$ is isomorphic to the inflation of 
$\widetilde{\rho}_{\mathbf{HU}_{(V,-h)}(\bF_{q^m}),\psi_m}$ by $\iota$. 
\end{lem}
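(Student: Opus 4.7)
The plan is to reduce the claimed isomorphism to two compatibility checks: first on the subgroup $\mathbf{HU}_{(V,h)}(\bF_{q^m})$, and then on the generator $\sigma$ of $\Gamma$.

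For the first reduction, the center of $\bfH_{(V,h)}(\bF_{q^m})$ consists of pairs $(0,a)$ with $a \in \mathbb{F}_{q^m,+}$, and $\iota$ acts there as $(0,a) \mapsto (0,-a)$. Pulling back the central character $\psi_m$ of $\widetilde{\rho}_{\mathbf{HU}_{(V,-h)}(\bF_{q^m}),\psi_m}$ through $\iota$ therefore produces the central character $a \mapsto \psi_m(-a) = \psi_m^{-1}(a)$, which matches that of $\widetilde{\rho}_{\mathbf{HU}_{(V,h)}(\bF_{q^m}),\psi_m^{-1}}$. The Stone--von Neumann uniqueness of the irreducible Heisenberg representation with given central character (implicit in Lemma \ref{lem:hw}) then forces the two representations to agree on $\bfH_{(V,h)}(\bF_{q^m})$. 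Since $\iota$ restricts to the identity on $\bfU_{(V,h)}=\bfU_{(V,-h)}$ and the trace formula $\tr \rho(g) = (-1)^n(-q^m)^{N(g)}$ in Lemma \ref{lem:hw} is independent of $\psi$, the characterization extends verbatim to $\mathbf{HU}_{(V,h)}(\bF_{q^m})$: both sides restrict to the irreducible Heisenberg--Weil representation of $\mathbf{HU}_{(V,h)}(\bF_{q^m})$ with central character $\psi_m^{-1}$.

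For the second reduction, irreducibility of the Heisenberg--Weil representation together with Schur's lemma implies that two extensions to the semidirect product with $\Gamma$ differ by tensoring with a character $\chi$ of $\Gamma$. Hence it suffices to verify that the traces at $(1,\sigma^j)$ agree for every $1 \leq j \leq m$. Writing $d=(m,j)$, both sides equal $q^{nd}$: for the left-hand side this is Proposition \ref{prop:trzFr} \ref{enu:trFrj}; for the right-hand side the equality $\iota(1,\sigma^j)=(1,\sigma^j)$ reduces the computation to Proposition \ref{prop:trzFr} \ref{enu:trFrj} applied to $\widetilde{\rho}_{\mathbf{HU}_{(V,-h)}(\bF_{q^m}),\psi_m}$. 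Equality of these nonzero scalars forces $\chi(\sigma^j)=1$ for every $j$, hence $\chi = 1$.

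The main obstacle is to confirm that Proposition \ref{prop:trzFr} \ref{enu:trFrj} continues to hold after replacing $\psi_m$ by $\psi_m^{-1}$. This reduces, via the K\"unneth formula, to the Lefschetz computation in the proof of Lemma \ref{lem:FrS} \ref{enu:X11}, where the relevant trace is expressed as $-q^{-1}\sum_{\eta \in \mathbb{F}_{q,+}} \psi(\eta) \cdot \sharp\{\text{fixed points of }(-\eta)\mathrm{Fr}_{q^2}\}$. Only the term $\eta = 0$ contributes, and there $\psi(0) = \psi^{-1}(0) = 1$, so the answer is insensitive to inverting $\psi$; the same symmetry propagates through the inductive and covering-variety steps used in the proof of Proposition \ref{prop:trzFr}.
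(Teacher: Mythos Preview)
Your proof is correct and follows essentially the same approach as the paper's: first match on $\mathbf{HU}_{(V,h)}(\bF_{q^m})$ via the characterization in Lemma \ref{lem:hw}, then match the extension to $\Gamma$ via the trace values from Proposition \ref{prop:trzFr} \ref{enu:trFrj}. Your version simply fills in the details that the paper leaves implicit, including the observation that Proposition \ref{prop:trzFr} applies equally with $\psi$ replaced by $\psi^{-1}$; note also that checking the trace at the single generator $\sigma$ already suffices, since $q^n \neq 0$ forces $\chi(\sigma)=1$.
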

\begin{proof}
This follows from 
Lemma \ref{lem:hw} and Proposition \ref{prop:trzFr} \ref{enu:trFrj}. 
\end{proof}

Let $(2V,\pm h)=(V,h) \oplus (V,-h)$ be the orthogonal 
direct sum. 
By \eqref{4-1}, we have the group 
homomorphism 
\[
 \delta' \colon (\mathbf{HU}_{(V,h)}(\bF_{q^m}) \times \mathbf{HU}_{(V,-h)}(\bF_{q^m})) \rtimes 
\Gamma \to \mathbf{HU}_{(2V,\pm h)}(\bF_{q^m}) \rtimes \Gamma. 
\]
Let 
\[
\Delta \colon \mathbf{HU}_{(V,h)}(\bF_{q^m})\rtimes \Gamma \to 
 \mathbf{HU}_{(2V,\pm h)}(\bF_{q^m}) \rtimes \Gamma;\ (g,\sigma) \mapsto 
 \delta'(g,\iota(g,\sigma)).
 \]
\begin{lem}\label{lem:infd}
The inflation of $\widetilde{\rho}_{\mathbf{HU}_{(2V,\pm h)}(\bF_{q^m}),\psi_m}$ by $\Delta$
is isomorphic to the restriction of 
$\widetilde{\rho}_{\mathbf{HU}_{(V,h)}(\bF_{q^m}),\psi_m} \otimes \widetilde{\rho}_{\mathbf{HU}_{(V,h)}(\bF_{q^m}),\psi_m^{-1}}$. 
\end{lem}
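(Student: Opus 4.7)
The plan is to factor $\Delta$ through the product group and then combine Proposition \ref{prop:res} with Lemma \ref{iso}. Writing
\[
 \tilde\Delta \colon \mathbf{HU}_{(V,h)}(\bF_{q^m}) \rtimes \Gamma \to
 \bigl(\mathbf{HU}_{(V,h)}(\bF_{q^m}) \times \mathbf{HU}_{(V,-h)}(\bF_{q^m})\bigr) \rtimes \Gamma
\]
for the map sending $(g,\sigma)$ to $((g,\iota(g)),\sigma)$, the definition in the text gives exactly $\Delta = \delta' \circ \tilde\Delta$, since $\iota$ is the identity on the $\Gamma$-component.

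First I would apply Proposition \ref{prop:res} to the orthogonal decomposition $(2V,\pm h) = (V,h) \oplus (V,-h)$. This identifies the inflation of $\widetilde{\rho}_{\mathbf{HU}_{(2V,\pm h)}(\bF_{q^m}),\psi_m}$ by $\delta'$ with the restriction along $i$ (the map in the analogue of diagram \eqref{4-1}) of the external tensor product $\widetilde{\rho}_{\mathbf{HU}_{(V,h)}(\bF_{q^m}),\psi_m} \boxtimes \widetilde{\rho}_{\mathbf{HU}_{(V,-h)}(\bF_{q^m}),\psi_m}$.

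Next I would pull back further by $\tilde\Delta$. The composite $i \circ \tilde\Delta$ sends $(g,\sigma)$ to the element $((g,\sigma),(\iota(g),\sigma))$ of the direct product $(\mathbf{HU}_{(V,h)}(\bF_{q^m}) \rtimes \Gamma) \times (\mathbf{HU}_{(V,-h)}(\bF_{q^m}) \rtimes \Gamma)$, and the second coordinate is exactly $\iota(g,\sigma)$ under the semidirect extension of $\iota$. Pulling an external tensor product back along such a diagonal-type embedding yields the ordinary tensor product of the first factor with the $\iota$-pullback of the second, so we get
\[
 \Delta^* \widetilde{\rho}_{\mathbf{HU}_{(2V,\pm h)}(\bF_{q^m}),\psi_m}
 \;\cong\; \widetilde{\rho}_{\mathbf{HU}_{(V,h)}(\bF_{q^m}),\psi_m}
 \otimes \iota^* \widetilde{\rho}_{\mathbf{HU}_{(V,-h)}(\bF_{q^m}),\psi_m}.
\]
Finally, Lemma \ref{iso} rewrites $\iota^* \widetilde{\rho}_{\mathbf{HU}_{(V,-h)}(\bF_{q^m}),\psi_m}$ as $\widetilde{\rho}_{\mathbf{HU}_{(V,h)}(\bF_{q^m}),\psi_m^{-1}}$, giving the claim.

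There is no substantive obstacle: the proof is entirely a bookkeeping composition of the two previously established identifications. The only delicate point to verify is that everything is $\Gamma$-equivariant, i.e.\ that the $\Gamma$-component is preserved under $\tilde\Delta$, $i$, $\delta'$ and the extension of $\iota$; this is automatic from the definitions but should be stated explicitly to justify that $\Delta$ really equals $\delta' \circ \tilde\Delta$ as a group homomorphism.
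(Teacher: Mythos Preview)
Your proof is correct and follows essentially the same approach as the paper: apply Proposition \ref{prop:res} to the orthogonal decomposition $(2V,\pm h)=(V,h)\oplus(V,-h)$ to identify the inflation by $\delta'$ with the restriction of $\widetilde{\rho}_{\mathbf{HU}_{(V,h)}(\bF_{q^m}),\psi_m} \boxtimes \widetilde{\rho}_{\mathbf{HU}_{(V,-h)}(\bF_{q^m}),\psi_m}$, then invoke Lemma \ref{iso}. The paper's proof is terser, but your explicit factorization $\Delta=\delta'\circ\tilde\Delta$ and verification of the $\Gamma$-component are exactly the details underlying it.
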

\begin{proof}
The inflation of 
$\widetilde{\rho}_{\mathbf{HU}_{(2V,\pm h)}(\bF_{q^m}),\psi_m}$ 
by $\delta'$ 
is isomorphic to the restriction of the representation 
\[
 \widetilde{\rho}_{\mathbf{HU}_{(V,h)}(\bF_{q^m}),\psi_m} \boxtimes 
 \widetilde{\rho}_{\mathbf{HU}_{(V,-h)}(\bF_{q^m}),\psi_m}
\] 
to 
\[
(\mathbf{HU}_{(V,h)}(\bF_{q^m}) \times 
 \mathbf{HU}_{(V,-h)}(\bF_{q^m})) \rtimes \Gamma 
\] 
by Proposition \ref{prop:res}. 
Hence the claim follows from Lemma \ref{iso}. 
\end{proof}

Let $\mathbf{Z}_{(V,h)}$ be the center of 
$\mathbf{H}_{(V,h)}$. 
We put 
$\mathbf{ZU}_{(V,h)} = \mathbf{Z}_{(V,h)} \times \mathbf{U}_{(V,h)}$. 

\begin{lem}\label{lem:indtri}
The tensor product 
$\widetilde{\rho}_{\mathbf{HU}_{(V,h)}(\bF_{q^m}),\psi_m} \otimes \widetilde{\rho}_{\mathbf{HU}_{(V,h)}(\bF_{q^m}),\psi_m^{-1}}$ is 
isomorphic to the induction of the trivial character 
of $\mathbf{ZU}_{(V,h)}(\bF_{q^m}) \rtimes \Gamma$.  
\end{lem}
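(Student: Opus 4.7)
The plan is to combine Lemma \ref{lem:infd} with Proposition \ref{prop:par} applied to a Lagrangian subspace, and then finish by a character comparison.

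First I would invoke Lemma \ref{lem:infd} to rewrite the LHS as $\Delta^* \widetilde{\rho}_{\mathbf{HU}_{(2V, \pm h)}(\bF_{q^m}), \psi_m}$. Consider the diagonal subspace $W = \{(v, v) \mid v \in V\} \subset 2V$: since $(\pm h)((v,v),(v',v')) = h(v,v') - h(v,v') = 0$ and $\dim_{\bF_{q^2}} W = n = \tfrac{1}{2} \dim_{\bF_{q^2}}(2V)$, $W$ is Lagrangian, so $W^\perp = W$ and $W_0 := W^\perp/W = 0$. Applying Proposition \ref{prop:par} with $(V, h)$ replaced by $(2V, \pm h)$ and this Lagrangian $W$, the restriction of $\widetilde{\rho}_{\mathbf{HU}_{(2V, \pm h)}(\bF_{q^m}), \psi_m}$ to $\widetilde{G}_0 := (\mathbf{H}_{2V} \rtimes \mathbf{P}_W)(\bF_{q^m}) \rtimes \Gamma$ is isomorphic to $\mathrm{Ind}_{\widetilde{H}}^{\widetilde{G}_0} \widetilde{\chi}_0$, where $\widetilde{H} := (\mathbf{H}_W \rtimes \mathbf{P}_W)(\bF_{q^m}) \rtimes \Gamma$ and $\widetilde{\chi}_0$ is the pullback of $\widetilde{\rho}_{\mathbf{HU}_{W_0}(\bF_{q^m}), \psi_m}$. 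By Proposition \ref{prop:trzFr} \ref{enu:trFrj} with $n = 0$, the latter is $1$-dimensional, trivial on $\Gamma$, and equals $\psi_m$ on the center, so $\widetilde{\chi}_0(w, a, p, \sigma^j) = \psi_m(a)$. The map $\Delta(v, a, u, \sigma^j) = ((v,v), 0, (u,u), \sigma^j)$ clearly lands in $\widetilde{H}$ (since $(v,v) \in W$, $0 \in \mathbf{Z}$, and $(u,u) \in \mathbf{P}_W$), so $\widetilde{\chi}_0 \circ \Delta = \mathbf{1}$.

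It remains to show
\[
\Delta^* \mathrm{Ind}_{\widetilde{H}}^{\widetilde{G}_0} \widetilde{\chi}_0 \cong \mathrm{Ind}_{\mathbf{ZU}_{(V,h)}(\bF_{q^m}) \rtimes \Gamma}^{\mathbf{HU}_{(V,h)}(\bF_{q^m}) \rtimes \Gamma} \mathbf{1}.
\]
Both representations have dimension $[\widetilde{G}_0 : \widetilde{H}] = q^{2nm}$. To establish the isomorphism, I would evaluate the Frobenius character formula for induction on both sides and match the resulting sums via the natural identification $\widetilde{G}_0/\widetilde{H} = (2V)/W \cong V \cong G/(\mathbf{ZU}_{(V,h)}(\bF_{q^m}) \rtimes \Gamma)$. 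The identity $\widetilde{\chi}_0 \circ \Delta = \mathbf{1}$ then ensures that the $\psi_m$-weighted sum on the LHS collapses to the plain count of conjugates $|\{y \in G : y g y^{-1} \in \mathbf{ZU}_{(V,h)}(\bF_{q^m}) \rtimes \Gamma\}|/|\mathbf{ZU}_{(V,h)}(\bF_{q^m}) \rtimes \Gamma|$ that defines the RHS character.

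The main technical obstacle will be this last character comparison: setting up the bijection between the two conjugation-based indexing sets requires careful handling of the semidirect structure with $\Gamma$ and the action of the unipotent radical of $\mathbf{P}_W$, together with a verification that the $\psi_m$-phases produced by conjugation in $\mathbf{H}_{2V}$ exactly reproduce the counting on the RHS.
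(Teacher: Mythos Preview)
Your overall plan—inflate via $\Delta$ using Lemma~\ref{lem:infd}, apply Proposition~\ref{prop:par} to a Lagrangian $W\subset 2V$, then identify the pullback of the resulting induced representation—matches the paper's. The difference lies in the choice of Lagrangian: you take the diagonal $W=\{(v,v)\}$ (as in \cite{HeWaWeil}), whereas the paper takes $W=\{(v,\zeta v)\}$ for some $\zeta\in\bF_{q^2}\setminus\{1\}$ with $\Nr_{\bF_{q^2}/\bF_q}(\zeta)=1$, and explicitly remarks afterward that this choice makes the proof clearer.

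Your choice is exactly what creates the ``main technical obstacle'' you flag, and your sketch of how to overcome it does not go through as stated. Since $(v,v)\in W$ for every $v$, the image of $\Delta$ lies entirely inside $\widetilde H$, so $\Delta^{-1}(\widetilde H)$ is the whole group, not $\mathbf{ZU}_{(V,h)}(\bF_{q^m})\rtimes\Gamma$; thus $\Delta$ does \emph{not} induce the coset bijection $\widetilde G_0/\widetilde H\cong G/(\mathbf{ZU}_{(V,h)}(\bF_{q^m})\rtimes\Gamma)$ you invoke, and the identity $\widetilde\chi_0\circ\Delta=\mathbf 1$ does not collapse the induced-character sum, because conjugating $\Delta(x)$ by a coset representative $y$ produces a nontrivial $\psi_m$-phase in $\widetilde\chi_0(y\,\Delta(x)\,y^{-1})$ that still has to be summed over $y$. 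With the paper's $W$, by contrast, $(v,v)\in\{(w,\zeta w)\}$ forces $v=0$, so $\Delta^{-1}(\widetilde H)=\mathbf{ZU}_{(V,h)}(\bF_{q^m})\rtimes\Gamma$; and since $\{(v,v)\}\oplus\{(w,\zeta w)\}=2V$, the map $\Delta$ \emph{does} induce a bijection on coset spaces, whence $\Delta^*\,\mathrm{Ind}_{\widetilde H}^{\widetilde G_0}\widetilde\psi_m\cong\mathrm{Ind}_{\mathbf{ZU}_{(V,h)}(\bF_{q^m})\rtimes\Gamma}^{\mathbf{HU}_{(V,h)}(\bF_{q^m})\rtimes\Gamma}\mathbf 1$ follows at once, with no further character computation needed.
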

\begin{proof}
Take $\zeta \in \bF_{q^2} \setminus \{ 1 \}$ such that 
$\Nr_{\bF_{q^2}/\bF_q} (\zeta)=1$. 
Let $W$ be the Lagrangian subspace 
$\{(v,\zeta v) \in V \oplus V \}$ of $2V$, 
which is isotropic via $\pm h$. 
Note that $W^{\perp}=W$. 
Applying Proposition \ref{prop:par} in this situation, 
we have $W_0=\{0\}$, 
$\mathbf{HU}_{W_0}(\bF_{q^m}) \simeq \bF_{q^m,+}$
and $\widetilde{\rho}_{\mathbf{HU}_{W_0}(\bF_{q^m}),\psi_m}$
is the character 
\[
\bF_{q^m,+}\rtimes\Gamma \to\overline{\bQ}_{\ell}^{\times};\ 
(x,\sigma^i) \mapsto \psi_m(x). 
\]
Let 
\[
 \widetilde{\psi}_m \colon (\mathbf{H}_{(W,\pm h)} \rtimes \mathbf{P}_W)(\bF_{q^m}) \rtimes \Gamma \to \overline{\mathbb{Q}}_{\ell}^{\times};\ 
 \left((v,\zeta v),a,g,\sigma^i \right) \mapsto \psi_m(a). 
\]
Then $\widetilde{\psi}_m$ is the inflation 
of $\widetilde{\rho}_{\mathbf{HU}_{W_0}(\bF_{q^m}),\psi_m}$
via \eqref{infl}. 
Thus Proposition \ref{prop:par} gives an isomorphism 
\begin{equation}\label{eq:Ind2VW}
 \widetilde{\rho}_{\mathbf{HU}_{(2V,\pm h)}(\bF_{q^m}),\psi_m}|_{(\mathbf{H}_{(2V,\pm h)} \rtimes \mathbf{P}_W )(\bF_{q^m}) \rtimes \Gamma} 
 \simeq 
 \mathrm{Ind}_{(\mathbf{H}_{(W,\pm h)} \rtimes \mathbf{P}_W)(\bF_{q^m}) \rtimes \Gamma}^{(\mathbf{H}_{(2V,\pm h)} \rtimes \mathbf{P}_W)(\bF_{q^m}) \rtimes \Gamma} \widetilde{\psi}_m.  
\end{equation}
The image of $\Delta$ is contained in 
$(\mathbf{H}_{(2V,\pm h)} \rtimes \mathbf{P}_W)(\bF_{q^m}) \rtimes \Gamma$. 
The map $\Delta$ induces a bijection 
\begin{align*}
 (\mathbf{HU}_{(V,h)}(\bF_{q^m}) \rtimes \Gamma)/&
 (\mathbf{ZU}_{(V,h)}(\bF_{q^m}) \rtimes \Gamma) \xrightarrow{\sim} \\ 
 & ((\mathbf{H}_{(2V,\pm h)} \rtimes \mathbf{P}_W)(\bF_{q^m}) \rtimes \Gamma)/((\mathbf{H}_{(W,\pm h)} \rtimes \mathbf{P}_W)(\bF_{q^m}) \rtimes \Gamma). 
\end{align*}
Inflating \eqref{eq:Ind2VW} 
under $\Delta$, 
we obtain the claim by 
Lemma \ref{lem:infd}. 
\end{proof}

\begin{rem}
The proof of Lemma \ref{lem:indtri} 
is slightly different from that of \cite[Lemma 7.3]{HeWaWeil} 
in the sense that 
we use $\{(v,\zeta v) \in V \oplus V\}$ 
instead of 
$\{(v, v) \in V \oplus V \}$ as $W$. 
By this choice, the bijection 
at the end of the proof holds and 
the proof becomes clearer. 
\end{rem}

\begin{cor}\label{cor:out}
The trace of the representation 
$\widetilde{\rho}_{\mathbf{HU}_{(V,h)}(\bF_{q^m}),\psi_m}$ 
is zero outside 
the conjugates of $\mathbf{ZU}_{(V,h)}(\bF_{q^m}) \rtimes \Gamma$. 
\end{cor}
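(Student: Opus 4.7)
The plan is to derive the corollary from Lemma \ref{lem:indtri} by identifying the trace of the tensor product with a non-negative real number.

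Taking traces in Lemma \ref{lem:indtri}, for $g \in \mathbf{HU}_{(V,h)}(\bF_{q^m}) \rtimes \Gamma$ we obtain
\[
 \tr \widetilde{\rho}_{\mathbf{HU}_{(V,h)}(\bF_{q^m}),\psi_m}(g) \cdot
 \tr \widetilde{\rho}_{\mathbf{HU}_{(V,h)}(\bF_{q^m}),\psi_m^{-1}}(g)
 = \tr\bigl( \mathrm{Ind}_{\mathbf{ZU}_{(V,h)}(\bF_{q^m}) \rtimes \Gamma}^{\mathbf{HU}_{(V,h)}(\bF_{q^m}) \rtimes \Gamma} \mathbf{1} \bigr)(g),
\]
and the right hand side, being the character of an induction of the trivial representation, vanishes whenever $g$ is not conjugate into $\mathbf{ZU}_{(V,h)}(\bF_{q^m}) \rtimes \Gamma$ by the standard character formula for induced representations.

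The key intermediate step is to show that $\widetilde{\rho}_{\mathbf{HU}_{(V,h)}(\bF_{q^m}),\psi_m^{-1}}$ is isomorphic to the contragredient of $\widetilde{\rho}_{\mathbf{HU}_{(V,h)}(\bF_{q^m}),\psi_m}$. On the subgroup $\mathit{HU}(V,h)$, the Weil trace $(-1)^n(-q^m)^{N(g)}$ is real, so the contragredient of $\rho_{\mathit{HU}(V,h),\psi_m}$ satisfies both conditions in Lemma \ref{lem:hw} for the character $\psi_m^{-1}$; uniqueness there gives $\rho_{\mathit{HU}(V,h),\psi_m}^\vee \cong \rho_{\mathit{HU}(V,h),\psi_m^{-1}}$. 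To upgrade this to an isomorphism of extensions to $\mathbf{HU}_{(V,h)}(\bF_{q^m}) \rtimes \Gamma$, I would use that the extensions of an irreducible representation through a cyclic quotient form a torsor under $\Gamma^\vee$ and compare values at the generator $\sigma$: on the one hand $\tr(\widetilde{\rho}_{\psi_m})^\vee(\sigma) = \overline{\tr \widetilde{\rho}_{\psi_m}(\sigma)} = \overline{q^n} = q^n$, and on the other $\tr \widetilde{\rho}_{\psi_m^{-1}}(\sigma) = q^n$ by Proposition \ref{prop:trzFr} \ref{enu:trFrj}, so the twist relating the two extensions is trivial.

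With this identification, $\tr \widetilde{\rho}_{\mathbf{HU}_{(V,h)}(\bF_{q^m}),\psi_m^{-1}}(g) = \overline{\tr \widetilde{\rho}_{\mathbf{HU}_{(V,h)}(\bF_{q^m}),\psi_m}(g)}$ since $g$ has finite order, so the displayed equality reads $\lvert \tr \widetilde{\rho}_{\mathbf{HU}_{(V,h)}(\bF_{q^m}),\psi_m}(g) \rvert^2 = 0$ whenever $g$ lies outside the conjugates of $\mathbf{ZU}_{(V,h)}(\bF_{q^m}) \rtimes \Gamma$, proving the corollary. The only delicate point is the contragredient identification at the level of the extended group, but this reduces to the single numerical comparison at $\sigma$ above.
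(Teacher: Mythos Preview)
Your proof is correct and follows essentially the same approach as the paper: both deduce the corollary from Lemma \ref{lem:indtri} by identifying $\widetilde{\rho}_{\psi_m^{-1}}$ with the contragredient (equivalently, the complex conjugate after fixing $\overline{\mathbb{Q}}_\ell \simeq \mathbb{C}$) of $\widetilde{\rho}_{\psi_m}$, using Lemma \ref{lem:hw} on the non-extended group and Proposition \ref{prop:trzFr}\ref{enu:trFrj} to pin down the extension to $\Gamma$. The paper makes the choice of isomorphism $\overline{\mathbb{Q}}_\ell \simeq \mathbb{C}$ explicit and phrases things in terms of complex conjugation rather than the contragredient, but the argument is the same.
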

\begin{proof}
We take an isomorphism $\overline{\mathbb{Q}}_{\ell}
\simeq \mathbb{C}$. Using this isomorphism, we consider 
$\widetilde{\rho}_{\mathbf{HU}_{(V,h)}(\bF_{q^m}),\psi_m}$ and $\widetilde{\rho}_{\mathbf{HU}_{(V,h)}(\bF_{q^m}),\psi_m^{-1}}$ as representations over $\mathbb{C}$. 
First, we show that the representation 
$\widetilde{\rho}_{\mathbf{HU}_{(V,h)}(\bF_{q^m}),\psi_m^{-1}}$
is isomorphic to the complex conjugate of 
$\widetilde{\rho}_{\mathbf{HU}_{(V,h)}(\bF_{q^m}),\psi_m}$. 
Clearly $\psi_m^{-1}$ is equal to the complex conjugate of $\psi_m$. Hence  
$\rho_{\mathbf{H}_{(V,h)}(\bF_{q^m}),\psi_m^{-1}}$ 
is isomorphic to the complex 
conjugate of $\rho_{\mathbf{H}_{(V,h)}(\bF_{q^m}),\psi_m}$. 
Therefore, 
we obtain the claim by Lemma \ref{lem:hw} and 
Proposition \ref{prop:trzFr} \ref{enu:trFrj}, 
since $\widetilde{\rho}_{\mathbf{HU}_{(V,h)}(\bF_{q^m}),\psi_m^{-1}}$ is irreducible as an $\mathbf{HU}_{(V,h)}(\bF_{q^m})$-representation. 

The required claim follows from this and Lemma \ref{lem:indtri}. 
\end{proof}

\section{Main theorem}\label{sec:Main}
Let the notation be as in \S \ref{sec:norm}. 
\begin{thm}\label{main}
Let $1 \leq i \leq m$ be an integer. 
We set $d=(m,i)$. We take an integer 
$t$ such that $ti \equiv d \pmod m$. 
Then there is a unique extension 
$\widetilde{\rho}_{\mathbf{HU}_{(V,h)}(\bF_{q^m}),\psi_m}$ of 
$\rho_{\mathbf{HU}_{(V,h)}(\bF_{q^m}),\psi_m}$ to 
$\mathbf{HU}_{(V,h)}(\bF_{q^m}) \rtimes \Gamma$ such that 
\begin{equation}\label{char}
\tr \widetilde{\rho}_{\mathbf{HU}_{(V,h)}(\bF_{q^m}),\psi_m}(g,\sigma^i)
=\tr \widetilde{\rho}_{\mathbf{HU}_{(V,h)}(\bF_{q^d}),\psi_d}(\mathrm{N}_{i,t}(g,\sigma^i))
\end{equation}
for any $g \in \mathbf{HU}_{(V,h)}(\bF_{q^m})$. 
\end{thm}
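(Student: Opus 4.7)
The existence of an extension is already supplied by Definition~\ref{def:Vh}, so the theorem reduces to two claims: (a) the specific extension $\widetilde{\rho}_{\mathbf{HU}_{(V,h)}(\bF_{q^m}),\psi_m}$ constructed there satisfies the character identity \eqref{char}, and (b) any other extension of $\rho_{\mathbf{HU}_{(V,h)}(\bF_{q^m}),\psi_m}$ satisfying \eqref{char} is isomorphic to it. Part (b) is short: specializing \eqref{char} to $g=1$ pins down the character at each $\sigma^i$, and two extensions of an irreducible representation differ by a character of $\Gamma$; equality of the characters on every $\sigma^i$ forces that twist to be trivial.

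For (a), the plan is to combine the support result Corollary~\ref{cor:out} with the compatibilities of \S\ref{od} and the functoriality of the norm map (Lemma~\ref{gy}) to reduce \eqref{char} to the anisotropic case treated by a tensor induction argument. First, Corollary~\ref{cor:out} shows that the left-hand side of \eqref{char} vanishes unless $(g,\sigma^i)$ is $\mathbf{HU}_{(V,h)}(\bF_{q^m})$-conjugate into $\mathbf{ZU}_{(V,h)}(\bF_{q^m}) \rtimes \sigma^i$; applying the same corollary at the smaller level $\bF_{q^d}$, together with the bijectivity of $\mathrm{N}_{i,t}$ on conjugacy classes, shows the right-hand side vanishes in exactly the same situation. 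Hence we may assume $g=(z,u)\in\mathbf{ZU}_{(V,h)}(\bF_{q^m})$.

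Next, I would decompose $V$ into an orthogonal sum of $\langle u,\sigma^i\rangle$-stable subspaces. If any summand contains a $u$-invariant, $\sigma^i$-stable isotropic subspace, the parabolic compatibility Proposition~\ref{prop:par} together with Lemma~\ref{gy}\,\ref{enu:HInd} reduces the identity to the Levi quotient; if $V$ itself splits non-trivially as such an orthogonal sum, Proposition~\ref{prop:res} combined with Lemma~\ref{gy}\,\ref{enu:chiprod} reduces the identity to each direct summand (where the analogous identity holds by induction on $\dim V$). Iterating these two reductions leaves the situation of Lemma~\ref{inv}, in which $V$ is a sum of copies of a single anisotropic hermitian space cyclically permuted by $\sigma^i$.

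In this final special case, the K\"unneth decomposition built into the construction of \S\ref{GC} identifies $\widetilde{\rho}_{\mathbf{HU}_{(V,h)}(\bF_{q^m}),\psi_m}$ with a tensor induction of the $\bF_{q^d}$-representation $\rho_{\mathbf{HU}(\bF_{q^d}),\psi_d}$ along the cyclic action of $\sigma^i$. The character formula for tensor induction of \cite{GlIsTen} then expresses $\tr\widetilde{\rho}(g,\sigma^i)$ as the trace of that $\bF_{q^d}$-representation at the product $g\,F^i(g)\cdots F^{i(\mu-1)}(g)$, up to the conjugation by $\alpha$ in the definition of $\mathrm{N}_{i,t}$; this is exactly the right-hand side of \eqref{char}, and the one-dimensional base case of this comparison is Proposition~\ref{prop:trzFr}. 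The hard part of the proof is the orthogonal/parabolic reduction: one has to organize the decompositions so that they are equivariant for $\sigma^i$, which is a rationality question settled by Lang's theorem inside $\mathbf{U}_{(V,h)}$ together with the rationality statement Lemma~\ref{rat}.
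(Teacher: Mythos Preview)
Your overall architecture matches the paper's: uniqueness is immediate from irreducibility, Corollary~\ref{cor:out} reduces to $\mathbf{ZU}$, one inducts on $n$ via Propositions~\ref{prop:res} and~\ref{prop:par} combined with Lemma~\ref{gy}, and a terminal case is handled by tensor induction. But two of your steps are organized differently from the paper, and as written each has a gap.

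The induction is not driven by $\langle u,\sigma^i\rangle$-stable subspaces at the $\bF_{q^m}$-level. The paper first base-changes $\bF_q$ to $\bF_{q^d}$ to reduce to $d=1$, and then pivots entirely on the \emph{norm} $g_0=\mathrm{N}_{i,t}(g,\sigma^i)\in U(V,h)$ at the small level: one asks whether $g_0$ stabilizes a nontrivial orthogonal decomposition, or a nontrivial isotropic subspace, of $(V,h)$ over $\bF_{q^2}$. Such a subspace yields a connected $\bF_q$-subgroup of $\mathbf{HU}_V$, which is precisely the input Lemma~\ref{gy} requires; bijectivity of $\mathrm{N}_{i,t}$ on conjugacy classes then lets one replace $(g,\sigma^i)$ by a conjugate lying over that subgroup. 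Your $\langle u,\sigma^i\rangle$-stable subspaces live at the wrong level for Lemma~\ref{gy} to apply directly, and invoking Lang's theorem with Lemma~\ref{rat} does not by itself supply the needed $\bF_q$-rational decomposition; the passage through $g_0$ is exactly what makes the machinery run. Your description of the terminal case is also off. Lemma~\ref{inv} does not say $V$ is a sum of copies cyclically permuted by $\sigma^i$; it says (after Lemma~\ref{red}) that $E=\bF_{q^2}[s]$ is a field and $\dim_E V=1$, so $n=[E:\bF_{q^2}]$ is odd. The product $V_m=\bigoplus_{\alpha\in\Gamma_e}V_\alpha$ with $e=(m,n)$ (not $d$) appears only after tensoring to $\bF_{q^m}$, and the tensor induction $R$ of \S\ref{Prred} is from the rank-one group $\mathit{HU}(V_{\alpha_0},\widetilde{h}_{\alpha_0})\rtimes\langle\sigma^e\rangle$ over $E_{\alpha_0}\simeq\bF_{q^{2mn/e}}$, not from ``the $\bF_{q^d}$-representation $\rho_{\mathbf{HU}(\bF_{q^d}),\psi_d}$''. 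The paper does not read \eqref{char} off the tensor-induction character formula as a norm identity; it identifies $R$ with the restriction of $\widetilde{\rho}_{\mathbf{HU}_{(V,h)}(\bF_{q^m}),\psi_m}$ (Corollary~\ref{cor:infpro}) and then matches both sides of \eqref{char} by explicit evaluation, Lemma~\ref{e1}\,\ref{enu:trs} for the right-hand side and Corollary~\ref{cor:infpro}\,\ref{enu:trs'sig} for the left, each ultimately resting on Proposition~\ref{prop:trzFr}.
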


In \S \ref{GC}, we already know that 
$\widetilde{\rho}_{\mathbf{HU}_{(V,h)}(\bF_{q^m}),\psi_m}$ in Definition \ref{def:Vh} is an extension of 
$\rho_{\mathbf{HU}_{(V,h)}(\bF_{q^m}),\psi_m}$. 
Since $\rho_{\mathbf{HU}_{(V,h)}(\bF_{q^m}),\psi_m}$
is irreducible, only one extension can satisfy \eqref{char}. 
The aim in the rest of this paper is to prove that the extension 
$\widetilde{\rho}_{\mathbf{HU}_{(V,h)}(\bF_{q^m}),\psi_m}$ actually 
satisfies \eqref{char}.

\section{Reduction steps}\label{Red}
To show Theorem \ref{main}, 
we imitate the arguments in \cite[\S8]{HeWaWeil}. 
We recall a known fact. 

\begin{lem}\label{lem:ss}
Let $g_0 \in \oU (V,h)$. 
Assume that $g_0$ stabilizes no non-trivial isotropic 
subspace. Then $g_0$ is semisimple. 
\end{lem}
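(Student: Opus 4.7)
The plan is to prove the contrapositive: assuming $g_0$ is not semisimple, I will construct a non-trivial isotropic $g_0$-stable subspace of $V$. I would start from the Jordan decomposition in the algebraic group $\mathbf{U}_V$, writing $g_0 = su$ with $s$ semisimple, $u$ unipotent, and $su = us$. Since $\bF_{q^2}$ is perfect, uniqueness of Jordan decomposition forces $s, u \in U(V,h)$. The assumption that $g_0$ is not semisimple means $u \neq 1$, so writing $u = 1 + N$ we get a non-zero nilpotent $N \in \End_{\bF_{q^2}}(V)$. Let $k \geq 2$ be the smallest integer with $N^k = 0$, and set $W = \Image(N^{k-1})$. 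This is a non-zero $\bF_{q^2}$-subspace of $V$, and since $s$ commutes with $u$ it commutes with $N$, so $g_0$ commutes with $N$; in particular $W$ is $g_0$-stable.

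The core step is to show $W$ is totally isotropic. Let $N^*$ denote the adjoint of $N$ with respect to $h$. The unitarity relation $u^{\ast}u = 1$ expands to $(1+N^*)(1+N) = 1$, yielding
\[
 N^* = -N(1+N)^{-1} = -N u^{-1},
\]
where $(1+N)^{-1} = 1 - N + N^2 - \cdots$ is well-defined by nilpotence. Since $N$ and $u^{-1}$ commute,
\[
 (N^*)^{k-1} = (-1)^{k-1} N^{k-1}\, u^{-(k-1)}.
\]
Writing $u^{-(k-1)} = 1 + M$ with $M$ a polynomial in $N$ having no constant term, we get $u^{-(k-1)} N^{k-1} = N^{k-1} + M N^{k-1} = N^{k-1}$ since $M N^{k-1}$ is a combination of $N^{j}$ with $j \geq k$. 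Hence
\[
 N^{k-1}\, u^{-(k-1)}\, N^{k-1} = N^{2(k-1)} = 0,
\]
using $2(k-1) \geq k$. Therefore $h(N^{k-1}x, N^{k-1}y) = h\bigl(x, (N^*)^{k-1} N^{k-1} y\bigr) = 0$ for all $x, y \in V$, so $W$ is totally isotropic, contradicting the hypothesis on $g_0$.

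The main obstacle I anticipate is the adjoint calculation $N^* = -N u^{-1}$ together with the cancellation $N^{k-1} u^{-(k-1)} N^{k-1} = N^{2(k-1)}$; once these are in hand, the conclusion is immediate. Everything else in the argument is either standard (Jordan decomposition over a perfect field, the commutation $[s,N] = 0$) or a direct consequence of $N^k = 0$ with $k \geq 2$.
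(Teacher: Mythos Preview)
Your proof is correct and follows essentially the same strategy as the paper: argue the contrapositive via the Jordan decomposition $g_0=su$ and produce a $g_0$-stable isotropic subspace from the nilpotent $N=u-\id_V$. The only cosmetic difference is that the paper uses $\Image(N)\cap\Ker(N)$ (and leaves the isotropy verification to the reader), whereas you use the smaller subspace $\Image(N^{k-1})\subset\Image(N)\cap\Ker(N)$ and spell out the adjoint computation $N^*=-N(1+N)^{-1}$ explicitly; both choices work for the same reason.
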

\begin{proof}
The claim is stated in \cite[proof of Theorem 4.9.2]{GerWeil}. 
We recall a proof here (\cf \cite[\S 8]{HeWaWeil}). 
Let $g_0 =su$ be the Jordan decomposition in $\oU (V,h)$. 
If $u \neq \id_V$, then 
$\Image (u -\id_V ) \cap \Ker (u-\id_V )$ 
is a non-trivial isotropic subspace of $V$ stable under $g_0$. 
Hence the claim follows. 
\end{proof}

We fix $i$ and $t$. 
Changing the base field from $\bF_q$ to $\bF_{q^d}$, 
we may assume that $d=(m,i)=1$ (\cf \cite[Remark 3.1 (ii)]{HeWaWeil}).
Let $g \in \mathbf{HU}_{(V,h)}(\bF_{q^m})$. 
We set 
\[
g_0=\mathrm{N}_{i,t}(g,\sigma^i). 
\]
If $g_0$ does not belong to $\mathbf{ZU}_{(V,h)}(\bF_q)$, 
the both sides of \eqref{char} are $0$ by Corollary \ref{cor:out}. 
Assume $g \in \mathbf{Z}_{(V,h)}(\bF_{q^m}) \simeq \bF_{q^m,+}$. 
The restriction of $\widetilde{\rho}_{\mathbf{HU}_{(V,h)}(\bF_{q^m}),\psi_m}$ to $\mathbf{Z}_{(V,h)}(\bF_{q^m})$ is a multiple of $\psi_m$. 
Clearly $((m-1)i/2,m)=1$ by $(m,i)=1$.
Hence the left hand side of \eqref{char} equals 
$\psi_m(g) q^n$ by Proposition \ref{prop:trzFr} \ref{enu:trFrj}
for $j=(m-1)i/2$, because 
$\sigma=\Fr_{q^2}^{\frac{m-1}{2}}$. 
By the definition of $\mathrm{N}_{i,t}$ and $(m,i)=1$,
we have 
\[
g_0=g \sigma^i(g) \cdots \sigma^{i(m-1)}(g)
=\Tr_{\mathbb{F}_{q^{2m}}/\mathbb{F}_{q^2}}(g). 
\]
Thus  
the right hand side of \eqref{char} is 
$\psi (g_0 ) \dim \widetilde{\rho}_{\HU(V,h),\psi} 
 =\psi_m(g) q^n$. 
Hence we have \eqref{char} on 
$\mathbf{Z}_{(V,h)}(\bF_{q^m}) \rtimes \Gamma$. 
By applying Lemma \ref{gy} \ref{enu:chiprod} 
to the product 
$\mathbf{ZU}_{(V,h)}=\mathbf{Z}_{(V,h)} \times \mathbf{U}_{(V,h)}$, 
we may assume that $g_0 \in \oU (V,h)$. 

We show Theorem \ref{main} in the case where 
$g_0 \in \oU (V,h)$ by induction on $n$. 
The claim for $n=1$ is shown in Proposition \ref{prop:trzFr} \ref{enu:trmd}. 
Assume $n>1$. 
If $g_0$ stabilizes a non-trivial 
orthogonal decomposition of $(V,h)$, 
the claim follows from the induction hypothesis 
by Lemma \ref{gy} \ref{enu:chiprod}, 
and Proposition \ref{prop:res}. 
If $g_0$ stabilizes a non-trivial isotropic subspace of $V$, 
the claim follows from the induction hypothesis 
by Lemma \ref{gy} \ref{enu:HInd} 
and Proposition \ref{prop:par}. 

Now, we may assume that \textit{$g_0$ stabilizes no non-trivial isotropic 
subspace and stabilizes no non-trivial orthogonal 
decomposition}. By Lemma \ref{lem:ss}, $g_0$ is semisimple. 
We write $s$ for $g_0$. 

We set $A=\mathrm{End}_{\mathbb{F}_{q^2}}(V)$. 
Let 
\[
\dagger_h \colon A \to A;\ f \mapsto f^{\dagger_h} 
\]
be the adjoint 
involution associated to $h$. Namely, we have 
$h(f(x),y)=h(x,f^{\dagger_h}(y))$ for any $x,y \in V$. 
Then we have $s^{\dagger_h}=s^{-1}$ by definition. 
Hence the involution stabilizes $\mathbb{F}_{q^2}[s]$.  
\begin{lem}\label{red}
The subalgebra $\mathbb{F}_{q^2}[s] \subset \mathrm{End}_{\mathbb{F}_{q^2}}(V)$ is a field. 
\end{lem}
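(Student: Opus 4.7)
The plan is to exploit both the semisimplicity of $s$ and the fact that the adjoint involution $\dagger_h$ stabilizes $R := \mathbb{F}_{q^2}[s]$ (since $s^{\dagger_h} = s^{-1}$), using the two rigidity hypotheses on $s$ to rule out any non-trivial decomposition of $R$.

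First, because $s$ is semisimple, $R$ is a semisimple commutative $\mathbb{F}_{q^2}$-algebra, hence a finite product $R \simeq \prod_j K_j$ of field extensions of $\mathbb{F}_{q^2}$. Let $\{e_j\}$ be the associated primitive idempotents. Each $e_j$ lies in $\mathbb{F}_{q^2}[s]$ and therefore commutes with $s$, so the decomposition $V = \bigoplus_j e_j V$ is $s$-stable. To prove the lemma I need to show there is only one factor.

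The next step is to observe that $\dagger_h$, while being an anti-automorphism of $A$, restricts to an involutive (conjugate-linear) automorphism of the commutative subalgebra $R$, and therefore permutes the finite set $\{e_j\}$ of primitive idempotents. I will then split into the two possible orbit types. If some $e_j$ is fixed by $\dagger_h$, then for $x \in e_j V$ and $y \in (1-e_j)V$ the identity
\[
h(x, y) = h(e_j x, y) = h(x, e_j^{\dagger_h} y) = h(x, e_j (1-e_j) y) = 0
\]
gives an $s$-stable orthogonal decomposition $V = e_j V \perp (1-e_j) V$, which is non-trivial as soon as $e_j \neq 1$; this contradicts the assumption that $s$ stabilizes no non-trivial orthogonal decomposition. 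If, on the other hand, some orbit has size two, say $e_j^{\dagger_h} = e_k$ with $k \neq j$, then for $x, y \in e_j V$,
\[
h(x, y) = h(e_j x, y) = h(x, e_k y) = h(x, e_k e_j y) = 0,
\]
so $e_j V$ is a non-zero proper $s$-stable isotropic subspace, contradicting the other assumption on $s$.

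In either case a non-trivial decomposition of $R$ produces a forbidden invariant subspace or a forbidden orthogonal splitting. Hence $R$ has only one factor, and $\mathbb{F}_{q^2}[s]$ is a field. The only genuinely delicate point I expect is confirming that $\dagger_h$ really does stabilize $R$ and permute the primitive idempotents despite being conjugate-linear and an anti-automorphism on $A$; once that formal check is in hand, the two-case analysis above is essentially immediate.
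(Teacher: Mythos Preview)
Your proposal is correct and follows essentially the same argument as the paper: decompose $\mathbb{F}_{q^2}[s]$ as a product of fields using semisimplicity, note that $\dagger_h$ permutes the primitive idempotents, and then observe that a fixed idempotent $e_j \neq 1$ yields an $s$-stable orthogonal splitting while a swapped pair yields an $s$-stable isotropic subspace. The only cosmetic difference is that the paper picks one index $\alpha$ and checks whether $\alpha = \overline{\alpha}$, whereas you phrase the dichotomy in terms of orbit types; the content is identical.
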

\begin{proof}
Since $s$ is semisimple, we can write as 
\[
\mathbb{F}_{q^2}[s] \simeq \prod_{\alpha \in I} F_{\alpha}, 
\]
where $F_{\alpha}$ is a field. 
Let $e_{\alpha}$ be the idempotent in $\mathbb{F}_{q^2}[s]$ associated to 
$F_{\alpha}$. 
We have a direct sum 
\[
V=\bigoplus_{\alpha \in I} V_{\alpha}, 
\]
where $V_{\alpha}=\{v \in V \mid e_{\alpha} v=v\}$. 
The subspaces $V_{\alpha}$ are $s$-stable. 
The involution $\dagger_h$ gives a permutation 
$\alpha \mapsto \overline{\alpha}$ on $I$, 
with an isomorphism $F_{\alpha} \simeq F_{\overline{\alpha}}$. 

Assume that $I$ has at least two elements. 
We take $\alpha \in I$. 
If $\alpha=\overline{\alpha}$, then $V$ is the orthogonal sum of 
$V_{\alpha}$ and $\bigoplus_{\beta \neq \alpha} V_{\beta}$ as hermitian spaces. Actually, we have 
\[
h(x_{\alpha},x_{\beta})=h(e_{\alpha}x_{\alpha},x_{\beta})=h(x_{\alpha},e_{\overline{\alpha}} x_{\beta})=h(x_{\alpha},e_{\alpha} x_{\beta})=0 
\]
for any $\alpha \neq \beta$, $x_{\alpha} \in V_{\alpha}$ and $x_{\beta} \in V_{\beta}$. Hence $V$ has a non-trivial $s$-stable orthogonal decomposition. This 
contradicts to the assumption. 

Assume $\alpha \neq \overline{\alpha}$. 
Then $V_{\alpha}$ is a non-trivial $s$-stable isotropic 
subspace of $V$. 
Actually, we have 
\[
h(x_{\alpha},x'_{\alpha})=h(e_{\alpha}x_{\alpha},x'_{\alpha})=h(x_{\alpha},e_{\overline{\alpha}} x'_{\alpha})=0
\]
for any $x_{\alpha},x'_{\alpha} \in V_{\alpha}$. 
Again this contradicts to the assumption. 
Hence $I$ consists of one element. The claim follows. 
\end{proof}

We put 
\[
E=\mathbb{F}_{q^2}[s] \subset A, \quad E_+ =E^{\dagger_h}. 
\]
We regard $V$ as an $E$-vector space. 

\begin{lem}\label{inv}
\begin{enumerate}
\item 
The extension 
$E/E_+$ is a quadratic extension and 
$[E_+:\mathbb{F}_q]$ is odd. 
\item 
There exists a nondegenerate hermitian form $\widetilde{h} \colon 
V \times V \to E$ such that $h=\Tr_{E/\mathbb{F}_{q^2}} \circ \widetilde{h}$. 
\item 
We have $\dim_E V=1$. 
\end{enumerate}
\end{lem}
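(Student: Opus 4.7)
The plan is to analyze the restriction of $\dagger_h$ to the subfield $E=\bF_{q^2}[s]$, construct $\widetilde{h}$ via the trace pairing, and then rule out $\dim_E V>1$ using the reduction assumptions.

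For (1), I would first observe that $\dagger_h|_{\bF_{q^2}}$ agrees with the Frobenius $a\mapsto a^q$: indeed, for the sesquilinear form $h$ on the first variable, the adjoint of the scalar operator ``multiplication by $a$'' is ``multiplication by $a^q$''. Hence $\dagger_h|_E$ is a non-trivial involution of $E$ over $\bF_q$, so $[E:E_+]=2$. Setting $k=[E:\bF_{q^2}]$, the cyclic group $\Gal(E/\bF_q)$ has a unique element of order $2$, namely $\mathrm{Frob}_{q^k}$; this must restrict to the non-trivial $\mathrm{Frob}_q$ on $\bF_{q^2}$, which forces $k$ to be odd. Combined with $\bF_{q^2}\cap E_+=\bF_q$ and $E=\bF_{q^2}\cdot E_+$, this gives $[E_+:\bF_q]=k$ odd.

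For (2), I would use the nondegeneracy of the trace pairing $E\times E\to\bF_{q^2}$, $(x,y)\mapsto\Tr_{E/\bF_{q^2}}(xy)$. For each $(v,w)\in V\times V$, the map $E\to\bF_{q^2}$, $e\mapsto h(v,ew)$, is $\bF_{q^2}$-linear in $e$, so there exists a unique $\widetilde{h}(v,w)\in E$ with
\[
 h(v,ew)=\Tr_{E/\bF_{q^2}}\bigl(\widetilde{h}(v,w)\cdot e\bigr)\quad\text{for every }e\in E.
\]
Taking $e=1$ gives $h=\Tr_{E/\bF_{q^2}}\circ\widetilde{h}$. The $E$-linearity of $\widetilde{h}$ in the second variable and the $\dagger_h$-semilinearity in the first variable follow formally from this characterization and the identity $h(ev,w)=h(v,e^{\dagger_h}w)$. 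The hermitian property $\widetilde{h}(w,v)=\widetilde{h}(v,w)^{\dagger_h}$ is obtained by combining the hermitian property of $h$ with the fact that $\dagger_h|_E$ lies in the abelian group $\Gal(E/\bF_q)$, hence commutes with $\Tr_{E/\bF_{q^2}}$; the verification reduces to the tautology $h(e^{\dagger_h}w,v)=h(w,ev)$ via a substitution $e\mapsto e^{\dagger_h}$. Nondegeneracy of $\widetilde{h}$ follows from that of $h$ together with nondegeneracy of the trace pairing.

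For (3), suppose $d=\dim_E V\geq 2$ and pick any one-dimensional $E$-subspace $W\subset V$. Let $W^{\perp}$ denote its orthogonal with respect to $\widetilde{h}$, which is an $E$-hyperplane by nondegeneracy of $\widetilde{h}$. Both $W$ and $W^{\perp}$ are $s$-stable since they are $E$-subspaces. If $W\cap W^{\perp}=0$, then $V=W\oplus W^{\perp}$ is a non-trivial $s$-stable orthogonal decomposition for $\widetilde{h}$, and applying $\Tr_{E/\bF_{q^2}}$ propagates orthogonality to $h$, contradicting the assumption on $s$. Otherwise $W\subseteq W^{\perp}$, so $W$ is $\widetilde{h}$-isotropic, hence $h$-isotropic, giving a non-trivial $s$-stable isotropic subspace, again a contradiction. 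Thus $d=1$.

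The main obstacle is the bookkeeping in (2): none of the individual verifications is deep, but the interplay between the trace pairing, the involution $\dagger_h|_E$, and the hermitian condition must be tracked carefully through the substitution $e\mapsto e^{\dagger_h}$ to avoid conjugation errors. Once $\widetilde{h}$ is in hand, (3) is a short orthogonality/isotropy dichotomy exploiting the reduction to $s$ stabilizing neither.
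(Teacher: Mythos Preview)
Your proof is correct and follows essentially the same approach as the paper. The paper's argument is very terse---for (1) it simply notes that $\dagger_h$ on $\bF_{q^2}$ is the $q$-th power map, for (2) it defines $\widetilde{h}$ by the identical trace-pairing condition $\Tr_{E/\bF_{q^2}}(a\,\widetilde{h}(v,v'))=h(v,av')$, and for (3) it observes that $s$ acts by $E$-scalars so any orthogonal decomposition of $(V,\widetilde{h})$ over $E$ would be $s$-stable---and your write-up fills in the details the paper leaves implicit. The only minor difference is in (3): the paper uses only the ``no orthogonal decomposition'' hypothesis (tacitly invoking that a nondegenerate hermitian space of dimension $\geq 2$ has an anisotropic vector), whereas your isotropic/anisotropic dichotomy on an arbitrary $E$-line uses both reduction hypotheses but avoids that auxiliary fact.
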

\begin{proof}
The first claim follows from that 
$\dagger_h$ on $\mathbb{F}_{q^2}$ is the $q$-th power map. 

Since $\Tr_{E/\bF_{q^2}} \colon E \times E \to \bF_{q^2}$ 
is nondegenerate, 
we can define a nondegenerate hermitian form 
$\widetilde{h} \colon V \times V \to E$ 
by the condition that 
\[
\Tr_{E/\bF_{q^2}}(a \widetilde{h}(v,v')) =h(v,a v') 
\]
for $a \in E, v,v' \in V$. 
Hence we obtain the second claim. 

The element $s \in E^{\times}$ acts on $V$ as 
scalar multiplication. 
Since $s$ stabilizes no non-trivial orthogonal decomposition of $(V,\widetilde{h})$ as hermitian spaces over $E$ by the assumption, 
we obtain the third claim.   
\end{proof}

\section{Proof in the reduced case}\label{Prred}
We will show Theorem \ref{main} in the situation 
of Lemma \ref{inv}. 
Note that $n$ is odd and $E=\bF_{q^{2n}}$ by Lemma \ref{inv}. 
By the natural homomorphism 
$\oU (V,\widetilde{h}) \hookrightarrow \oU (V,h)$, we obtain 
\[
 r \colon \HU(V,\widetilde{h}) \to \HU(V,h);\ 
 (v,a,g) \mapsto (v,\Tr_{E/\mathbb{F}_{q^2}}(a),g) . 
\]
We put $\psi_E =\psi \circ \Tr_{E/\mathbb{F}_{q^2}} \in 
\mathbb{F}_{q^n,+}^{\vee}$. 
\begin{lem}\label{e1}
\begin{enumerate}
\item\label{enu:inf} 
The inflation of $\rho_{\HU(V,h),\psi}$
by $r$ is isomorphic to 
$\rho_{\HU(V,\widetilde{h}),\psi_E}$.
\item\label{enu:trs} 
Let $s \in \oU (V,\widetilde{h})$. 
Then we have 
\[
\tr \rho_{\HU(V,h),\psi}(s)
=\tr \rho_{\HU(V,\widetilde{h}),\psi_E}(s)
=
\begin{cases}
q^n & \textrm{if $s=1$}, \\
-1 & \textrm{otherwise}.  
\end{cases}
\]
\end{enumerate}
\end{lem}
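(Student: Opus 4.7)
Since we are in the reduced situation $d=1$, the Galois factor $\Gamma$ in the definition of $\widetilde{\rho}_{\mathbf{HU}_{(V,h)}(\bF_q),\psi}$ is trivial, so $\widetilde{\rho}_{\mathit{HU}(V,h),\psi} = \rho_{\mathit{HU}(V,h),\psi}$, and similarly $\widetilde{\rho}_{\mathit{HU}(V,\widetilde h),\psi_E}=\rho_{\mathit{HU}(V,\widetilde h),\psi_E}$ (viewing $(V,\widetilde h)$ as a hermitian space over $E/E_+$ with $|E_+|=q^n$). The plan for \ref{enu:inf} is to invoke the characterization of Heisenberg--Weil representations from Lemma \ref{lem:hw}: I will verify that the inflation $r^{\ast}\rho_{\mathit{HU}(V,h),\psi}$ satisfies both defining properties of $\rho_{\mathit{HU}(V,\widetilde h),\psi_E}$.

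For the first property, I restrict $r$ to the Heisenberg subgroup, getting a homomorphism $H(V,\widetilde h) \to H(V,h)$ sending $(v,a)\mapsto (v,\Tr_{E/\bF_{q^2}}(a))$. The key point is that this restriction is surjective: since $\bF_{q^n,+}$ is a one-dimensional $\bF_{q^n}$-subspace of $E$ and $n$ is odd, its $\bF_{q^2}$-span equals $E$, so $\Tr_{E/\bF_{q^2}}(\bF_{q^n,+})$ cannot be contained in $\ker\Tr_{E/\bF_{q^2}}$, forcing $\Tr_{E/\bF_{q^2}}(\bF_{q^n,+})=\bF_{q,+}$. Consequently the inflation remains irreducible of dimension $q^n$, and on the center $\bF_{q^n,+}$ of $H(V,\widetilde h)$ acts by $\psi\circ\Tr_{E/\bF_{q^2}}=\psi_E$, so by Stone--von Neumann uniqueness it coincides with $\rho_{H(V,\widetilde h),\psi_E}$.

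For the second property, let $g\in U(V,\widetilde h)\subset U(V,h)$ and set $N_E(g)=\dim_E\ker(g-\id_V)$. Since $\ker(g-\id_V)$ is an $E$-subspace of $V$, one has $\dim_{\bF_{q^2}}\ker(g-\id_V)=n\cdot N_E(g)$. Applying Lemma \ref{lem:hw} to both Heisenberg--Weil representations with their respective ground fields and using that $n$ is odd, I must verify
\[
(-1)^n(-q)^{n N_E(g)} \;=\; -(-q^n)^{N_E(g)},
\]
which is an immediate sign computation since $(-q)^n=-q^n$ when $n$ is odd. The main obstacle is this surjectivity/irreducibility argument for the inflation on the Heisenberg part; the remaining content is routine.

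For \ref{enu:trs}, the first equality is immediate from \ref{enu:inf}. For the value, I use that $\dim_E V=1$, so every $s\in U(V,\widetilde h)$ acts on $V$ by scalar multiplication; hence $N_E(s)=1$ when $s=\id_V$ and $N_E(s)=0$ otherwise. Lemma \ref{lem:hw} applied to the hermitian space $(V,\widetilde h)$ of $E$-dimension $1$ then yields $\tr\rho_{\mathit{HU}(V,\widetilde h),\psi_E}(s)=-(-q^n)^{N_E(s)}$, which is $q^n$ if $s=\id_V$ and $-1$ otherwise, completing the proof.
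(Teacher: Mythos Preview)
Your proof is correct and follows essentially the same route as the paper: both arguments verify the Heisenberg restriction via Stone--von Neumann (you via surjectivity of $r|_{H(V,\widetilde h)}$, the paper via the central character plus dimension count) and then invoke the characterization in Lemma~\ref{lem:hw} for the trace on $U(V,\widetilde h)$. The paper simply reverses the logical order, deducing \ref{enu:inf} from \ref{enu:trs}, whereas you carry out the sign check $(-1)^n(-q)^{nN_E(g)}=-(-q^n)^{N_E(g)}$ explicitly; the content is the same.
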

\begin{proof}
Let $r^\ast \rho_{\HU(V,h),\psi}$
denote the inflation of $\rho_{\HU(V,h),\psi}$
by $r$. 
The restriction of $r^\ast \rho_{\HU(V,h),\psi}$ to the center of $\oH (V,\widetilde{h})$ is a multiple of $\psi_E$ with multiplicity $q^n$. Hence we have 
\[
(r^\ast \rho_{\HU(V,h),\psi})|_{\oH (V,\widetilde{h})} \simeq \rho_{\oH (V,\widetilde{h}),\psi_E}. 
\]
Therefore the second claim implies the first one. 
The second claim follows from Lemma \ref{lem:hw}
(\cf \cite[Corollary 3.5]{GerWeil}). 
\end{proof}
Let $e=(m,n)$. 
We put $\Gamma_e=\mathrm{Gal}(\mathbb{F}_{q^e}/\mathbb{F}_q)$. 
For $\alpha \in \Gamma_e$, 
we put 
\[
E_{\alpha}=E \otimes_{\mathbb{F}_{q^e},\alpha} \mathbb{F}_{q^m}, \quad V_{\alpha}=V \otimes_{\mathbb{F}_{q^e},\alpha} \mathbb{F}_{q^m}, 
\]
where $\mathbb{F}_{q^m}$ is regarded as an $\bF_{q^e}$-algebra
via the composite $\bF_{q^e} \xrightarrow{\alpha} \bF_{q^e} \hookrightarrow 
\bF_{q^m}$. 
Note that $E_{\alpha}$ is isomorphic to $\bF_{q^{2mn/e}}$. 
We have 
\begin{equation}\label{factor}
\begin{split}
 &E \otimes_{\mathbb{F}_q} \mathbb{F}_{q^m} \xrightarrow{\sim} 
 \prod_{\alpha \in \Gamma_e}
 E_{\alpha} ;\ 
 x \otimes a \mapsto (x \otimes a)_{\alpha}, \\  
 &V \otimes_{\mathbb{F}_q} \mathbb{F}_{q^m} \xrightarrow{\sim} 
 \prod_{\alpha \in \Gamma_e}
 V_{\alpha} ;\ 
 v \otimes a \mapsto (v \otimes a)_{\alpha}. 
\end{split}
\end{equation}
The base change of $\widetilde{h}$ for 
$\mathbb{F}_{q^m}/\mathbb{F}_q$ 
induces a hermitian form 
$\widetilde{h}_{\alpha} \colon V_{\alpha} \times V_{\alpha} \to E_{\alpha}$. 
We put 
\[
h_{\alpha}=\Tr_{E_{\alpha}/\bF_{q^{2m}}} \circ \widetilde{h}_{\alpha}, 
\]
which is a hermitian form on $V_{\alpha}$ over $\mathbb{F}_{q^{2m}}$. 

Let $(V_m,h_m)$ be the base change of $(V,h)$ 
for $\bF_{q^m}/\bF_q$. 
Then $(V_m,h_m)$ is isomorphic to the 
orthogonal sum  $\bigoplus_{\alpha} (V_{\alpha},h_{\alpha})$. 
Hence we have the homomorphisms
\begin{equation}\label{ff}
 \prod_{\alpha \in \Gamma_e} 
 \HU(V_{\alpha},\widetilde{h}_{\alpha}) 
 \hookrightarrow \prod_{\alpha \in \Gamma_e} 
 \HU(V_{\alpha},h_{\alpha}) \hookrightarrow
 \HU(V_m,h_m) = 
 \mathbf{HU}_{(V,h)}(\bF_{q^m}). 
\end{equation}

We put $\alpha_0 =\id_{\bF_{q^e}} \in \Gamma_e$. 
We omit the index set $\Gamma_e$ in the notation.  
Let  
\[
\mathrm{pr}_{\alpha_0} \colon \prod_{\alpha}
 \HU(V_{\alpha},\widetilde{h}_{\alpha}) \to \HU(V_{\alpha_0},\widetilde{h}_{\alpha_0}) 
\]
be the projection. 
We consider the homomorphism 
\begin{equation}\label{eq:prid}
\left(\prod_{\alpha}\HU(V_{\alpha},\widetilde{h}_{\alpha})\right) \rtimes \langle \sigma^e\rangle
\xrightarrow{\mathrm{pr}_{\alpha_0} \times \id}
\HU(V_{\alpha_0},\widetilde{h}_{\alpha_0}) \rtimes \langle \sigma^e\rangle, 
\end{equation}
where the left hand side is regarded as a subgroup of 
$\left(\prod_{\alpha}\HU(V_{\alpha},\widetilde{h}_{\alpha})\right) \rtimes \Gamma$. 
We put 
\[
\psi_{E_{\alpha}}=\psi \circ \Tr_{E_{\alpha}/\mathbb{F}_{q^2}} \in \mathbb{F}_{q^{mn/e},+}^{\vee}. 
\]
Let $R$ denote the tensor induction to 
\[ 
\left(\prod_{\alpha}\HU(V_{\alpha},\widetilde{h}_{\alpha})\right) \rtimes \Gamma 
\] 
of the inflation of 
 $\widetilde{\rho}_{\HU(V_{\alpha_0},\widetilde{h}_{\alpha_0}),\psi_{E_{\alpha_0}}}$ under \eqref{eq:prid}. 
 Here and in the sequel, we identify 
 $\langle \sigma^e\rangle=\Gal (\bF_{q^{2m}}/\bF_{q^{2e}})$ with 
 $\Gal (\bF_{q^{2mn/e}}/\bF_{q^{2n}})$ via the natural isomorphism 
 $\bF_{q^{2m}} \otimes_{\bF_{q^{2e}}} \bF_{q^{2n}} \cong \bF_{q^{2mn/e}}$. 
 We take $s'_0 \in \oU (V_{\alpha_0},\widetilde{h}_{\alpha_0})$ with norm $s \in \oU (V,\widetilde{h})$. 
We set 
\[
s'=(s'_0,1,\ldots,1) \in \prod_{\alpha}\oU (V_{\alpha},\widetilde{h}_{\alpha}). 
\]
Recall that we assume $(m,i)=1$. 

\begin{lem}\label{pp1}
We have 
\[
\tr R(s',\sigma^i)=
\begin{cases}
q^n & \textrm{if $s=1$}, \\
-1 & \textrm{otherwise}. 
\end{cases}
\]
\end{lem}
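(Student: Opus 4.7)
The plan is to invoke the character formula for tensor induction from \cite{GlIsTen} to express $\chi_R(s',\sigma^i)$ in terms of the building-block representation $\rho_0 = \widetilde{\rho}_{\mathit{HU}(V_{\alpha_0},\widetilde{h}_{\alpha_0}),\psi_{E_{\alpha_0}}}$, and then to evaluate this trace using the one-dimensional analysis of Proposition \ref{prop:trzFr} \ref{enu:trmd} combined with Lemma \ref{e1} \ref{enu:trs}.

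First I would analyze the orbit structure on $G/H$, where $G = \bigl(\prod_{\alpha}\mathit{HU}(V_{\alpha},\widetilde{h}_{\alpha})\bigr)\rtimes\Gamma$ and $H = \bigl(\prod_{\alpha}\mathit{HU}(V_{\alpha},\widetilde{h}_{\alpha})\bigr)\rtimes\langle\sigma^e\rangle$. Under the identification $G/H\cong\Gamma_e$, the action of $(s',\sigma^i)$ is multiplication by the image $\bar\sigma^i$ of $\sigma^i$ in $\Gamma_e$. Because $(m,i)=1$ together with $e\mid m$ force $(e,i)=1$, the element $\bar\sigma^i$ generates $\Gamma_e$, so there is a single orbit of length $e$. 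Taking $(1,1)\in G$ as an orbit representative, the tensor induction character formula collapses to
\[
\chi_R(s',\sigma^i) = \chi_\rho\bigl((s',\sigma^i)^e\bigr),
\]
where $\rho$ is the inflation of $\rho_0$ through $\pr_{\alpha_0}\times\id$.

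Next I would expand $(s',\sigma^i)^e = \bigl(\prod_{k=0}^{e-1}\sigma^{ki}(s'),\,\sigma^{ei}\bigr)$ by the semidirect product multiplication rule. Since $s' = (s'_0,1,\ldots,1)$ is supported only at $\alpha_0$ and $\sigma^{ki}$ permutes the factors by $\bar\sigma^{ki}$, only $k=0$ can contribute to the $\alpha_0$-component (the condition $\bar\sigma^{ki}=1$ forces $e\mid k$, and $(e,i)=1$ then leaves $k=0$ as the unique solution in $\{0,\ldots,e-1\}$). Hence $(\pr_{\alpha_0}\times\id)\bigl((s',\sigma^i)^e\bigr) = (s'_0,\sigma^{ei})$, and
\[
\chi_R(s',\sigma^i) = \tr\rho_0(s'_0,\sigma^{ei}).
\]

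Finally I would evaluate $\tr\rho_0(s'_0,\sigma^{ei})$ in two steps. Since $(V_{\alpha_0},\widetilde{h}_{\alpha_0})$ is one-dimensional over $E_{\alpha_0}\cong\bF_{q^{2mn/e}}$ and $U(V_{\alpha_0},\widetilde{h}_{\alpha_0})\cong\mu_{q^{mn/e}+1}$, applying Proposition \ref{prop:trzFr} \ref{enu:trmd} after rescaling the base from $\bF_q$ to $\bF_{q^n}$ reduces the trace to $\tr\widetilde{\rho}_{\mathit{HU}(V,\widetilde{h}),\psi_E}(\bar s)$, where $\bar s$ is the image of $s'_0$ under the multiplicative norm $\mu_{q^{mn/e}+1}\to\mu_{q^n+1}\cong U(V,\widetilde{h})$. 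Because the Gyoja norm on one-dimensional unitary groups coincides with this multiplicative norm, the hypothesis that $s'_0$ has Gyoja norm $s$ gives $\bar s = s$, and Lemma \ref{e1} \ref{enu:trs} then yields $q^n$ if $s=1$ and $-1$ otherwise, which is exactly the claim. The main obstacle I anticipate is the Galois bookkeeping in this last step: identifying $\sigma^{ei}$ as the correct Frobenius power over $\bF_{q^{2n}}$ so that the rescaled Proposition \ref{prop:trzFr} \ref{enu:trmd} applies with reduced parameter equal to $n$, and verifying that the Gyoja norm defining $s'_0$ matches $x\mapsto x^{(q^{mn/e}+1)/(q^n+1)}$ on the relevant cyclic groups; once these compatibilities are pinned down, the rest is substitution.
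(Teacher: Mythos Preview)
Your proposal is correct and follows essentially the same route as the paper: both apply the tensor induction character formula from \cite{GlIsTen} to reduce to a single-orbit contribution $\tr\rho_0(s'_0,\sigma^{ei})$, then evaluate via Proposition~\ref{prop:trzFr}. The only cosmetic difference is that the paper invokes Proposition~\ref{prop:trzFr}~\ref{enu:zFrj} directly, whereas you pass through Proposition~\ref{prop:trzFr}~\ref{enu:trmd} and Lemma~\ref{e1}~\ref{enu:trs}; since part~\ref{enu:trmd} is derived from part~\ref{enu:zFrj} and Lemma~\ref{e1}~\ref{enu:trs} is the one-dimensional Weil character value, the two computations are equivalent, and your phrasing has the mild advantage of making the link to the norm hypothesis on $s'_0$ explicit.
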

\begin{proof}
The group $\Gamma$ permutes the factors 
$\{V_{\alpha}\}_{\alpha \in \Gamma_e}$ in \eqref{factor}
transitively and the stabilizer of each factor is $\langle \sigma^e\rangle$. 
Hence by \cite[\S2]{GlIsTen}, 
\begin{align*}
\tr R(s',\sigma^i)&=
\tr \widetilde{\rho}_{\mathrm{HU}(V_{\alpha_0},\widetilde{h}_{\alpha_0}),\psi_{E_{\alpha_0}}}
((\mathrm{pr}_{\alpha_0} \times \id)(s',\sigma^i)^e) \\ 
&=
\tr 
\widetilde{\rho}_{\mathrm{HU}(V_{\alpha_0},\widetilde{h}_{\alpha_0}),\psi_{E_{\alpha_0}}}(s'_0,\sigma^{ie})
\end{align*}
 (\cf \cite[Definitions 10 and 11]{KnoFrob}). 
 We note ${s'_0}^{\frac{q^{mn/e}+1}{q^n+1}}=s$. 
 Thus the claim follows from $(m,i)=1$ and 
 Proposition \ref{prop:trzFr} \ref{enu:zFrj} with taking 
 $(q^n,m/e)$ as $(q,m)$. 
\end{proof}
\begin{cor}\label{cor:infpro}
\begin{enumerate}
\item 
The $(\prod_{\alpha}\HU(V_{\alpha},\widetilde{h}_{\alpha}))\rtimes \Gamma$-representation $R$ is isomorphic to the inflation of 
$\widetilde{\rho}_{\mathbf{HU}_{(V,h)}(\bF_{q^m}),\psi_m}$ by the natural 
homomorphism 
\[
\left( \prod_{\alpha}\HU(V_{\alpha},\widetilde{h}_{\alpha})\right) \rtimes \Gamma \to \mathbf{HU}_{(V,h)}(\bF_{q^m}) \rtimes \Gamma. 
\]
\item\label{enu:trs'sig} 
We have 
\[
\tr \widetilde{\rho}_{\mathbf{HU}_{(V,h)}(\bF_{q^m}), \psi_m}(s',\sigma^i)
=\begin{cases}
q^n & \textrm{if $s=1$}, \\
-1 & \textrm{otherwise}. 
\end{cases}
\]
\end{enumerate}
\end{cor}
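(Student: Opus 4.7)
The plan is to prove (1) first; part (2) will then follow immediately by reading Lemma \ref{pp1} through the identification in (1). The strategy is standard Clifford-theoretic: show that both representations restrict to the same irreducible representation on the normal subgroup $N:=\prod_{\alpha}\mathit{HU}(V_{\alpha},\widetilde{h}_{\alpha})$, so that they differ at most by a character of $\Gamma$, and then pin down this character by a single trace comparison.

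First I would unfold the restriction to $N$ of the inflation of $\widetilde{\rho}_{\mathbf{HU}_{(V,h)}(\bF_{q^m}),\psi_m}$. By Lemma \ref{lem:HWg} the restriction to $\mathbf{HU}_{(V,h)}(\bF_{q^m})=\mathit{HU}(V_m,h_m)$ is $\rho_{\mathit{HU}(V_m,h_m),\psi_m}$. Proposition \ref{prop:res} applied to the orthogonal decomposition $(V_m,h_m)=\bigoplus_{\alpha}(V_{\alpha},h_{\alpha})$ further restricts this to $\boxtimes_{\alpha}\rho_{\mathit{HU}(V_{\alpha},h_{\alpha}),\psi_m}$, and Lemma \ref{e1}(\ref{enu:inf}) applied factor by factor pulls back along $\prod_{\alpha}r_{\alpha}$ to $\boxtimes_{\alpha}\rho_{\mathit{HU}(V_{\alpha},\widetilde{h}_{\alpha}),\psi_{E_{\alpha}}}$, which is irreducible as an external tensor product of Heisenberg--Weil representations.

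Next I would compute $R|_N$ via the standard restriction formula for tensor induction: taking $\{1,\sigma,\ldots,\sigma^{e-1}\}$ as coset representatives of $\langle\sigma^e\rangle$ in $\Gamma$, we have $R|_N\simeq\bigotimes_{j=0}^{e-1}{}^{\sigma^j}\!\widetilde{\pi}|_N$, where $\widetilde{\pi}$ is the inflation of $\widetilde{\rho}_{\mathit{HU}(V_{\alpha_0},\widetilde{h}_{\alpha_0}),\psi_{E_{\alpha_0}}}$ along $\mathrm{pr}_{\alpha_0}$. The key point is that $\sigma$ acts on $N$ as a permutation $\tau\in\Gamma_e$ of the factors combined with a Frobenius twist on each factor, so Lemma \ref{rat} identifies $\,{}^{\sigma^j}\!\widetilde{\pi}|_N$ with the pullback along $\mathrm{pr}_{\tau^{-j}\alpha_0}$ of $\rho_{\mathit{HU}(V_{\tau^{-j}\alpha_0},\widetilde{h}_{\tau^{-j}\alpha_0}),\psi_{E_{\tau^{-j}\alpha_0}}}$. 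Since $\tau$ generates $\Gamma_e$, the index $j$ runs once through every $\alpha\in\Gamma_e$, yielding $R|_N\simeq\boxtimes_{\alpha}\rho_{\mathit{HU}(V_{\alpha},\widetilde{h}_{\alpha}),\psi_{E_{\alpha}}}$, matching the previous restriction.

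Once both extensions of this irreducible are recognized as representations of $N\rtimes\Gamma$ differing by a character $\chi$ of $\Gamma$, I would compare traces at $(1,\sigma)$ to force $\chi=1$: Lemma \ref{pp1} with $s'=1$ and $i=1$ gives $\tr R(1,\sigma)=q^n$, while Proposition \ref{prop:trzFr}(\ref{enu:trFrj}) applied with $j=(m+1)/2$ gives $\tr\widetilde{\rho}_{\mathbf{HU}_{(V,h)}(\bF_{q^m}),\psi_m}(\sigma)=q^n$, because $\gcd(m,(m+1)/2)=1$ when $m$ is odd. Hence $\chi(\sigma)=1$ and, since $\sigma$ generates $\Gamma$, $\chi$ is trivial. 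This proves (1); part (2) is then Lemma \ref{pp1} transported through the isomorphism. The main obstacle is the $R|_N$ computation in the second paragraph: tracking how $\sigma$ combines a permutation of the $V_{\alpha}$ with a Frobenius twist on each factor, and invoking Lemma \ref{rat} to transport $\widetilde{\rho}$ between Frobenius-conjugate hermitian spaces, is the technical crux.
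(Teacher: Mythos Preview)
Your proposal is correct and follows essentially the same approach as the paper: restrict both representations to $N=\prod_{\alpha}\mathit{HU}(V_{\alpha},\widetilde{h}_{\alpha})$, identify each with $\boxtimes_{\alpha}\rho_{\mathit{HU}(V_{\alpha},\widetilde{h}_{\alpha}),\psi_{E_{\alpha}}}$ (the paper using Proposition~\ref{prop:res} and Lemma~\ref{e1}\ref{enu:inf} for one side and \cite[\S2]{GlIsTen} for the other), and then compare traces at $\sigma$ via Proposition~\ref{prop:trzFr}\ref{enu:trFrj} and Lemma~\ref{pp1}. The only difference is that you unfold the tensor-induction restriction by hand (bringing in Lemma~\ref{rat} for the Frobenius twist), whereas the paper simply invokes \cite[\S2]{GlIsTen} and treats this step as a black box; what you flag as the ``technical crux'' is exactly what the citation absorbs.
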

\begin{proof}
By \cite[\S 2]{GlIsTen}, the restriction of $R$
to the subgroup $\prod_{\alpha} \HU(V_{\alpha},\widetilde{h}_{\alpha})$ is isomorphic to 
$\boxtimes_{\alpha} \rho_{\HU(V_{\alpha},\widetilde{h}_{\alpha}),\psi_{E_{\alpha}}}$. 
The inflation of $\rho_{\mathbf{HU}_{(V,h)}(\bF_{q^m}),\psi_m}$
by \eqref{ff} is isomorphic to 
$\boxtimes_{\alpha} \rho_{\HU(V_{\alpha},\widetilde{h}_{\alpha}),\psi_{E_{\alpha}}}$
by Proposition \ref{prop:res} and Lemma \ref{e1} \ref{enu:inf}. 
By Proposition \ref{prop:trzFr} \ref{enu:trFrj}, we have
\[
\tr \widetilde{\rho}_{\mathbf{HU}_{(V,h)}(\bF_{q^m}),\psi_m}(\sigma)
=q^n. 
\]
Since $\sigma$ is a generator 
of $\Gamma$, 
the first claim follows from Lemma \ref{pp1}.  
The second claim follows from the first one and 
Lemma \ref{pp1}. 
\end{proof}

Note that 
$(g,\sigma^i)$ is $\mathbf{HU}_{(V,h)}(\bF_{q^m})$-conjugate to 
$(s',\sigma^i)$ 
for any $g \in \mathbf{HU}_{(V,h)}(\bF_{q^m})$ satisfying 
$s=\mathrm{N}_{i,t}(g,\sigma^i)$. 
Hence, 
it suffices to show 
\[
 \tr \widetilde{\rho}_{\mathbf{HU}_{(V,h)}(\bF_{q^m}), \psi_m}(s',\sigma^i)
 =\tr \rho_{\HU(V,h),\psi}(s) . 
\]
This follows from Lemma \ref{e1} \ref{enu:trs} and Corollary \ref{cor:infpro} \ref{enu:trs'sig}. 

\subsection*{Acknowledgements}
The authors would like to sincerely thank the reviewer for reading the paper and giving helpful comments. 
This work was supported by JSPS KAKENHI Grant Numbers 20K03529, 21H00973.


\begin{thebibliography}{10}
	
	\bibitem{DelCoet}
	P.~Deligne, Cohomologie \'etale, Lecture Notes in Mathematics, Vol. 569,
	Springer-Verlag, Berlin-New York (1977). S{\'e}minaire de G{\'e}om{\'e}trie
	Alg{\'e}brique du Bois-Marie SGA 4${\frac{1}{2}}$, Avec la collaboration de
	J. F. Boutot, A. Grothendieck, L. Illusie et J. L. Verdier.
	
	\bibitem{GerWeil}
	P.~G{\'{e}}rardin, \emph{Weil representations associated to finite fields}, J.
	Algebra \textbf{46} (1977), no.~1,  54--101.
	
	\bibitem{GlIsTen}
	D.~Gluck and I.~M. Isaacs, \emph{Tensor induction of generalized characters and
		permutation characters}, Illinois J. Math. \textbf{27} (1983), no.~3,
	514--518.
	
	\bibitem{GyoLift}
	A.~Gyoja, \emph{Liftings of irreducible characters of finite reductive groups},
	Osaka J. Math. \textbf{16} (1979), no.~1,  1--30.
	
	\bibitem{HeWaWeil}
	G.~Henniart and C.-H. Wang, \emph{Weil representations over finite fields and
		{S}hintani lift}, J. Algebra \textbf{388} (2013) 311--323.
	
	\bibitem{ITGeomHW}
	N.~Imai and T.~Tsushima, \emph{Geometric construction of {H}eisenberg-{W}eil
		representations for finite unitary groups and {H}owe correspondences}, Eur.
	J. Math. \textbf{9} (2023), no.~2,  Paper No. 31, 34.
	
	\bibitem{ITlgsw1}
	---{}---{}---, \emph{Local {G}alois representations of {S}wan conductor one},
	Pacific J. Math. \textbf{326} (2023), no.~1,  37--83.
	
	\bibitem{KawIrruni}
	N.~Kawanaka, \emph{On the irreducible characters of the finite unitary groups},
	J. Math. Soc. Japan \textbf{29} (1977), no.~3,  425--450.
	
	\bibitem{KnoFrob}
	R.~Kn{\"{o}}rr, \emph{On {F}robenius and tensor induction}, J. Algebra
	\textbf{317} (2007), no.~1,  17--29.
	
	\bibitem{ShiTworem}
	T.~Shintani, \emph{Two remarks on irreducible characters of finite general
		linear groups}, J. Math. Soc. Japan \textbf{28} (1976), no.~2,  396--414.
	
\end{thebibliography}

\noindent
Naoki Imai\\
Graduate School of Mathematical Sciences, The University of Tokyo, 3-8-1 Komaba, Meguro-ku, Tokyo, 153-8914, Japan \\
naoki@ms.u-tokyo.ac.jp \\[0.5cm]
Takahiro Tsushima\\ 
Keio University School of Medicine, 4-1-1 Hiyoshi, Kohoku-ku, Yokohama, 223-8521, Japan \\
tsushima@keio.jp

\end{document}